\newtheorem{theorem}{Theorem}[section]
\newtheorem{proposition}[theorem]{Proposition}
\newtheorem{corollary}[theorem]{Corollary}
\newtheorem{definition}[theorem]{Definition}
\begin{document} 

%%%%%%%%%%%%%%%%%%%%%%%%%%%%%%%%%%%%%%%%%%%%%%%%%%%%%%%%%%%%%%%%%%%%%%
\title{Centralizers of the Riordan Group}

\author{Tian-Xiao He$^{1}$ and  Yuanziyi Zhang$^{2}$\\
{\small 
$^{1}$Department of Mathematics}\\
 {\small Illinois Wesleyan University, Bloomington, Illinois 61702, USA}\\
 {\small $^2$ Mathematics Department}\\
{\small University of Illinois at Urbana-Champaign, Urbana, IL 61801, USA}\\
}

\date{}

%%%%%%%%%%%%%%%%%%%%%%%%%%%%%%%%%%%%%%%%%%%%%%%%%%%%%%%%%%%%%%%%%%%%%%
\maketitle
\setcounter{page}{1}
\pagestyle{myheadings}
\markboth{T. X. He and Y. Zhang}
{Centralizers of the Riordan Group}

%%%%%%%%%%%%%%%%%%%%%%%%%%%%%%%%%%%%%%%%%%%%%%%%%%%%%%%%%%%%%%%%%%%%%%
%INSERT ABSTRACT HERE
\begin{abstract}
In this paper, we discuss centralizers in the Riordan group. We will see that Fa\`a di Bruno's formula is an application of the Fundamental Theorem of Riordan arrays. Then the composition group of formal power series in ${\cal F}_1$ is studied to construct the centralizers of Bell type and Lagrange type Riordan arrays. Our tools are the $A$-sequences of Riordan arrays and Fa\`a di Bruno's formula. Some combinatorial explanation and discussion about related algebraic topics are also given. 
\end{abstract}
%%   We insist on abstract in your paper!

\section{Introduction}
\label{intro}
Riordan arrays are infinite, lower triangular matrices defined by the generating function of their columns. They form a group, called {\em the Riordan group} (see Shapiro, Getu, Woan and Woodson \cite{SGWW}).

More formally, let us consider the set of formal power series (f.p.s.) ${\mathcal F} = {\mathbb {K}}[\![$$z$$]\!]$ (${{\mathbb K}}={{\mathbb R}}$ or ${{\mathbb C}}$);
the \emph{order} of $f(z)  \in {\mathcal F}$, $f(z) =\sum_{k=0}^\infty f_kz^k$ ($f_k\in {{\mathbb R}}$), is the minimal number $r\in{\mathbb N}$ such that $f_r \neq 0$; ${\mathcal F}_r$ is the set of formal power series of order $r$. Let $d(z) \in {\mathcal F}_0$ and $h(z) \in {\mathcal F}_1$; the pair $(d(z) ,\,h(z) )$ defines the {\em (proper) Riordan array} $D=(d_{n,k})_{n,k\in \mbox{\scriptsize${\mathbb{N}}$}}=(d(z), h(z))$ having
  
\begin{equation}\label{Radef}
d_{n,k} = [z^n]d(z) h(z) ^k
\end{equation}
or, in other words, having $d(z) h(z)^k$ as the generating function whose coefficients make-up the
entries of column $k$. 

From the {\it fundamental theorem of Riordan arrays} (see \cite{SGWW}), 

\[
(d(z), h(z)) f(z)=d(z) f(h(z)),
\]
it is immediate to show that the usual row-by-column product of two Riordan arrays is also a Riordan array:
\begin{equation}\label{Proddef}
    (d_1(z) ,\,h_1(z) )  (d_2(z) ,\,h_2(z) ) = (d_1(z) d_2(h_1(z) ),\,h_2(h_1(z) )).
\end{equation}
The Riordan array $I = (1,\,z)$ acts as an identity for this product, that is, $(1,\,t) (d\left( z\right) ,\,h(z))=(d\left( z\right)
,\,h(z)) (1\,,z)=(d\left( z\right) ,\,h(z))$. Let $(d\left( z\right),\,h(z)) $ be a Riordan array. when its inverse is

\begin{equation}
(d\left( z\right) ,\,h(z))^{-1}=\left( \frac{1}{d(\overline{{h}}(z))},
\overline{h}(z),\right)  \label{Invdef}
\end{equation}
where $\overline {h}(z)$ is the compositional inverse of $h(z)$, i.e., $(h\circ 
\overline{h})(z)=(\overline{h}\circ h)(z)=z$. In this way, the set $\mathcal{R}$ of 
proper Riordan arrays forms a group (see \cite{SGWW}). 

Here is a list of six important subgroups of the Riordan group (see \cite{SGWW}), where 
$d\in{\cal F}_0$ and $h\in{\cal F}_1$.

\begin{itemize}
\item the {\it Appell subgroup} ${\cal A}=\{ (d(z),\,z)\}$.
\item the {\it Lagrange (associated) subgroup} ${\cal L}=\{(1,\,h(z))\}$.
\item the {\it Bell subgroup} ${\cal B}=\{(d(z),\, zd(z))\}$. 
\item the {\it hitting-time subgroup} ${\cal H}=\{(zh'(z)/h(z),\, h(z))\}$.
\item the {\it derivative subgroup} ${\cal D}=\{ (h'(z), \, h(z))\}.$
\item the {\it checkerboard subgroup} ${\cal C}=\{ (d(z),\, h(z))\},$ where $d$ is an even function and $h$ is an odd function. 
\end{itemize}

An infinite lower triangular array $[d_{n,k}]_{n,k\in{{\mathbb N}}}=(d(z), h(z))$ is a Riordan array if and only if a unique sequence $A=(a_0\not= 0, a_1, a_2,\ldots)$ exists such that for every $n,k\in{{\mathbb N}}$  (see Merlini, Rogers, Sprugnoli, and Verri \cite {MRSV})

\begin{equation}\label{eq:1.1}
d_{n+1,k+1} =a_0 d_{n,k}+a_1d_{n,k+1}+\cdots +a_nd_{n,n}. 
\end{equation} 
This is equivalent to 
\begin{equation}\label{eq:1.2}
h(z)=zA(h(z))\quad \text{or}\quad z=\bar h(z) A(z).
\end{equation}
Here, $A(z)$ is the generating function of the $A$-sequence. There exists a unique sequence $Z=(z_0, z_1,z_2,\ldots)$ such that every element in column $0$ can be expressed as the linear combination 
\begin{equation}\label{eq:1.3}
d_{n+1,0}=z_0 d_{n,0}+z_1d_{n,1}+\cdots +z_n d_{n,n},
\end{equation}
or equivalently,
\begin{equation}\label{eq:1.4}
d(z)=\frac{1}{1-zZ(h(z))}.
\end{equation}

\begin{definition}
The centralizer of a subset $S$ of group $G$ is defined to be 
\[	
C_G (S)=\{g \in G: gs=sg\mbox{ for all} \,\,s \in S \}
\]
\end{definition}
It is worth noting that $S$ may be a singleton set. Thus, the centralizer of a
subset $S$ of the group $G$ is the set of all elements in $G$ that fix each
element of $S$ under conjugation, i.e. commute with each element in $S.$ 

\noindent {\bf Example 1.1} Let $S=\{ (1+z^2, z)\}$. For any $(d,h)\in{{\cal R}}$ in $C_{{\cal R}} (S)$, we have 

\[
(d,h)(1+z^2,z)=(1+z^2,z)(d,h),
\]
i.e., 

\[
(d(1+h^2),h)=((1+z^2)d,h),
\]
which implies $h^2=z^2$, or equivalently, $h=\pm z$. Thus, $C_{{\cal R}} (S)=\{ (d,h)\in {{\cal R}}: h=\pm z\},$ the double Appell subgroup $A^{\pm}=(d(z), \pm z)$  of the Riordan group ${{\cal R}}$. To guarantee $C_{{\cal R}}(S)$ contains only elements of ${{\cal A}}$, the Appell subgroup of ${{\cal R}}$, we need an additional condition, which will be presented in Theorem \ref{thm:4.5}. 

A \emph{generalized Riordan array} with respect to the sequence $(c_n)_{n\in\mathbb{N}}$ ($c_n\not= 0$) is a pair $(g(z), f(z))$ of formal power series, where $g(z)=\sum_{k=0}^\infty g_kz^k/c_k$ and
$f(z)=\sum_{k=1}^\infty f_kz^k/c_k$ with $f_1\neq 0$. The Riordan array $(g(z),f(z))$ defines an infinite, lower triangular array $(d_{n,k})_{0\leq k\leq n<\infty}$ according to the rule:
\begin{equation}\label{RAelement}
d_{n,k}=\left[{\frac{z^n}{c_n}}\right]g(z)\frac{(f(z))^k}{c_k},
\end{equation}
where the functions $g(z)(f(z))^k/c_k$ are column generating functions of the Riordan array.

Let $D=(g(z),f(z))=(d_{n,k})_{n,k\in\mathbb{N}}$ be a Riordan array with respect to $(c_n)_{n\in\mathbb{N}}$ and let $h(z)=\sum_{k=0}^\infty h_kz^k/c_k$ be the generating function of the sequence $(h_n)_{n\in\mathbb{N}}$. Then we have the fundamental theorem of Riordan arrays (FTRA)

\begin{equation}\label{RAbasicsumrule}
\sum_{k=0}^nd_{n,k}h_k=\left[{\frac{z^n}{c_n}}\right]g(z)h(f(z))\,,
\end{equation}
or equivalently, $(g(z),f(z))*h(z)=g(z)h(f(z))$\,.

With every formal power series $f(z)=\sum_{k=1}^\infty f_kz^k/c_k$, we
associate the following infinite lower \emph{iteration matrix} with respect to $(c_n)_{n\in\mathbb{N}}$ (see p. 145 of Comtet \cite{comtet}):
\[
B(f(z)):=\left(\begin{array}{ccccccc}
1&0&0&0&0&0&\cdots\\
0&B_{1,1}&0&0&0&0&\cdots\\
0&B_{2,1}&B_{2,2}&0&0&0&\cdots\\
0&B_{3,1}&B_{3,2}&B_{3,3}&0&0&\cdots\\
0&B_{4,1}&B_{4,2}&B_{4,3}&B_{4,4}&0&\cdots\\
0&B_{5,1}&B_{5,2}&B_{5,3}&B_{5,4}&B_{5,5}&\cdots\\
\vdots&\vdots&\vdots&\vdots&\vdots&\vdots&\ddots
\end{array}\right)\,,
\]
where $B_{n,k}=B_{n,k}(f_1,f_2,\ldots)$ is the Bell polynomial with respect to $(c_n)_{n\in\mathbb{N}}$, defined as follows:

\begin{equation}\label{Bellptoom}
\frac{1}{c_k}(f(z))^k=\sum_{n=k}^\infty B_{n,k}\frac{z^n}{c_n}\,.
\end{equation}
The most obvious choices for $(c_{n})$ are $(1)$, $(n!)$, and $(n!n!)$ or $(n!!)$.

Therefore, $B_{n,k}=[z^n/c_n](f(z))^k/c_k$, which implies that the iteration matrix $B(f(z))$ is the Riordan array $(1, f(z))$. Now, the following important property of the iteration matrix
(see Theorem A on p. 145 of Comtet \cite{comtet}, Roman \cite{Roman84}, and Roman and Rota \cite{RomanRota78} )

\[
B(f(g(z)))=B(g(z))*B(f(z))
\]
is trivial in the context of the theory of Riordan arrays, i.e.,

\[
(1,f(g(z)))=(1,g(z))*(1,f(z))\,.
\]
In the case where $c_n=n!$ we obtain that the following well-known Fa\`{a} di Bruno's formula is an application of the FTRA:

\[
\sum_{k=1}^nB_{n,k}(g_1,g_2,\ldots,g_{n-k+1})f_k
    =\left[{\frac{t^n}{n!}}\right]f(g(t))\,.
\]
More precisely, Let $f$ and $g$ be two formal (Taylor) series:

\[
f=\sum_{k\geq 0}f_{k}\frac{z^{k}}{k!}, \quad g=\sum_{k\geq 0}g_{k}\frac{z^{k}}{k!}, 
\]
where $g_{0}=0$, and let $h=f\circ g$ with formal Taylor expansion 

\[
h=f\circ g=\sum_{n\geq 0}h_{n}\frac{z^{n}}{n!}.
\]
Then the coefficients $h_n$ are given by the following expression (see Theorem A on p. 137 of  Comtet \cite{comtet}): 

\[
h_{0}=f_{0},\quad h_{n}=\sum^{n}_{k=1}f_{k}B_{n,k}(g_{1},g_{2},\ldots, g_{n-k+1}),
\]
where the $B_{n,k}$ are the exponential Bell polynomials. $B_{n,k}$ can be presented as 

\[
B_{n,k}(g_{1},g_{2},\ldots, g_{n-k+1})=\sum \frac{n!}{c_{1}!c_{2}!\cdots (1!)^{c_{1}}(2!)^{c_{2}}\cdots} x_{1}^{c_{1}}x_{2}^{c_{2}}\cdots,
\]
where the summation takes place over all integers $c_{1}$, $c_{2}$, $c_{3}, \cdots \geq 0$ such that there hold (the partition of $n$ with $k$ parts)

\[
c_{1}+2c_{2}+3c_{3}+\cdots =n, \quad c_{1}+c_{2}+c_{3}+\cdots =k.
\]

\medbreak

\noindent{\bf Example 1.2} Consider the problem of finding the $n$-th derivative of $F(x)=x^{\alpha x}$, where $x>0$ and $\alpha\not= 0$ is a real number (cf. Comtet \cite{comtet}). By using the Stirling numbers of the first kind, we expand $f(t)=F(x+t)$ as 

\begin{eqnarray*}
&&f(t)=(x+t)^{\alpha (x+t)}=x^{\alpha (x+t)}\left( 1+\frac{t}{x}\right)^{\alpha (x+t)}=x^{\alpha x}x^{\alpha t}\left( 1+\frac{t}{x}\right)^{\alpha x\left( 1+\frac{t}{x}\right)}\\
&=& F(x)\exp (\alpha t \log x)\exp\left( ax\left(1+\frac{t}{x}\right) \log \left( 1+\frac{t}{x}\right)\right).
\end{eqnarray*}
Notice that there is a well-known expansion on P. 139 of Comtet \cite{comtet} shown as 

\[
\frac{1}{k!}((1+z)\log (1+z))^{k}=\sum_{n\geq k}b(n,k)\frac{z^{n}}{n!},
\]
where $b(0,0)=1$ and 

\[
b(n,k)=\sum_{k\leq \ell\leq n}\binom{\ell}{k}k^{\ell-k}s(n,\ell)
\]
with the Stirling numbers $s(n,\ell)$ of the first kind. The numbers $b(n,k)$, called Lehmer-Comtet numbers of first kind, follow the recurrence relation 

\[
b(n+1,k)=(k-n)b(n,k)+b(n,k-1)+nb(n-1,k-1),
\]
and the first few values are listed below 
\[
\left(\begin{array}{cccccccc}
1&0&0&0&0&0&0&\cdots\\
0&1&0&0&0&0&0&\cdots\\
0&1&1&0&0&0&0&\cdots\\
0&-1&3&1&0&0&0&\cdots\\
0&2&-1&6&1&0&0&\cdots\\
0&-6&0&5&10&1&0&\cdots\\
0&24&4&-15&25&15&1&\cdots\\
\vdots&\vdots&\vdots&\vdots&\vdots&\vdots&\vdots&\ddots
\end{array}\right)\,.
\]
More properties of Lehmer-Comtet numbers of first kind can be found in OEIS A008296, P. 139 of \cite{comtet}, and Gould \cite{Gou}. However, the combinatorial explanation seems not to be known. By using Lehmer-Comtet numbers of first kind, we obtain consequently
\[
f(t)=F(x)\sum_{j\geq 0, k\leq m}b(m,k) \frac{(\alpha t\log x)^{j}(t/x)^{m}}{j!m!}(\alpha x)^{k},
\]
which implies 

\[
f_{n}=\frac{d^{n}(x^{\alpha x})}{dx^{n}}=\alpha^{n}x^{\alpha x}\sum^{n}_{j=0}\left[ \binom{n}{j}u^{j}\sum^{n-j}_{k=0}b(n-j, n-k-j)v^{k}\right],
\]
where $u=\log x$ and $v=(\alpha x)^{-1}$. For instance, by using the above table of $b(n.k)$ we get

\begin{eqnarray*}
&&f_{4}=\frac{d^{4}(x^{\alpha x})}{dx^{4}}\\
&=&\alpha^{4}x^{\alpha x}\sum^{4}_{j=0}\left[ \binom{4}{j}u^{j}\sum^{4-j}_{k=0}b(4-j, 4-k-j)v^{k}\right]\\
&=&\alpha^{4}x^{\alpha x}\left[ \sum^{4}_{k=0}b(4, 4-k)v^{k}+4u\sum^{3}_{k=0}b(3, 3-k)v^{k}\right.\\
&&\quad \left. +6u^2\sum^{2}_{k=0}b(2, 2-k)v^{k}+3u^3\sum^{1}_{k=0}b(1, 1-k)v^{k}+u^4\right]\\
&=&\alpha^{4}x^{\alpha x}\left[ 1+6v-v^{2}+2v^{3}+4u(1+3v-v^{2})+6u^{2}(1+v)+4u^{3}+u^{4}\right].
\end{eqnarray*}

Additionally, for any $f\in{\cal F}_1$, the inverse array $(1,\bar{f})=B(\bar{f})$ is also an iteration matrix. Thus, the set of iteration matrices with respect to $(c_n)_{n\in\mathbb{N}}$, denoted by $\mathcal{B}$, is a nonempty subset of the Riordan group $\mathcal{R}$ with respect to $(c_n)_{n\in\mathbb{N}}$, closed under multiplication and taking inverses in
$\mathcal{R}$. These indicate that $\mathcal{B}$ is a subgroup of $\mathcal{R}$ and we call it the \emph{associated subgroup}.

If
${\displaystyle f(x)=\sum _{n=0}^{\infty }{\frac {b_{n}}{n!}}x^{n}}$ and
${\displaystyle g(x)=\sum _{n=1}^{\infty }{\frac {a_{n}}{n!}}x^{n}}$ 
and
$ {\displaystyle f(g(x))=\sum _{n=0}^{\infty }{\frac {d_{n}}{n!}}x^{n},}$ 
then the coefficient $d_n$ (which would be the nth derivative of $f\circ g$ evaluated at $0$ if we were dealing with convergent series rather than formal power series) is given by

\[
{\displaystyle d_{n}=\sum _{\pi =\left\{B_{1},\ldots ,B_{k}\right\}}a_{\left|B_{1}\right|}\cdots a_{\left|B_{k}\right|}b_{k}}
\]
where $\pi$ runs through the set of all partitions of the set $\{1, ..., n\}$ and $B_1$, $\ldots$, $B_k$ are the blocks of the partition $\pi$, and $|B_j|$ is the number of members of the $j$th block, for $j = 1, \ldots, k.$ This version of the formula is equivalent to Fa\'a di Bruno's formula for exponential formal power series shown before, i.e.,

\[
{\displaystyle f(g(x))=b_{0}+\sum _{n=1}^{\infty }{\frac {\sum _{k=1}^{n}b_{k}B_{n,k}(a_{1},\ldots ,a_{n-k+1})}{n!}}x^{n},}
\]
where $B_{n,k}(a_1,\ldots,a_{n-k+1})$ are Bell polynomials. Fa\'a di Bruno's formula is useful in finding centralizers, commutative compositions, Riordan involutions and pseudo-involutions. The centralizers related to the Riordan pseudo-involution multiplication and palindromes are shown in a recent paper by Shapiro and one of the authors \cite{HS20}.

In the next section, we will use Fa\'a di Bruno's formula and sequence characterization of Riordan arrays to construct centralizers of Bell type Riordan arrays and Riordan arrays of the form $(1/(1-kz), f)$, $f\in{{\mathcal F}_1}$. Some combinatorial explanation are also given. In Section $3$, we will study the method of the construction of centralizers of the Riordan arrays related to the composition group of formal power series in ${\mathcal F}_1$ and its application for Lagrange type Riordan arrays and Bell type Riordan arrays.  Section $4$ diverts to some related topics such as reversers, conjugates, commutators, twisted subgroup, and pseudo-Riordan involutions.

\section{Centralizers of the Riordan group}\label{sec:1}

Suppose $f(x)=\sum _{n=0}^{\infty }{a_{n}}x^{n}$ and $g(x)=\sum _{n=0}^{\infty }{b_{n}}x^{n}$ are formal power series and $b_{0}=0.$ Then the composition $ f\circ g$ is again a formal power series,

\begin{equation}\label{0.0}
f(g(x))=\sum _{n=0}^{\infty }{c_{n}}x^{n},
\end{equation}
where $c_0 = a_0$ and the other coefficients $c_n$ for $n \geq 1$ can be expressed as a sum over compositions of $n$ or as an equivalent sum over partitions of $n$:

\[
c_{n}=\sum _{\mathbf {i} \in {\mathcal {C}}_{n}}a_{k}b_{i_{1}}b_{i_{2}}\cdots b_{i_{k}},
\]
where

\[
{\mathcal {C}}_{n}=\{(i_{1},i_{2},\dots ,i_{k})\,:\ 1\leq k\leq n,\ i_{1}+i_{2}+\cdots +i_{k}=n\}
\]
is the set of compositions of n with k denoting the number of parts, or

\begin{equation}\label{0.0-2}
c_{n}=\sum _{k=1}^{n}a_{k}\sum _{\mathbf {\pi } \in {\mathcal {P}}_{n,k}}{\binom {k}{\pi _{1},\pi _{2},...,\pi _{n}}}b_{1}^{\pi _{1}}b_{2}^{\pi _{2}}\cdots b_{n}^{\pi _{n}},
\end{equation}
where

\[
{\mathcal {P}}_{n,k}=\{(\pi _{1},\pi _{2},\dots ,\pi _{n})\,:\ \pi _{1}+\pi _{2}+\cdots +\pi _{n}=k,\ \pi _{1}\cdot 1+\pi _{2}\cdot 2+\cdots +\pi _{n}\cdot n=n\}
\]
is the set of partitions of n into k parts, in frequency-of-parts form.

The first form is obtained by picking out the coefficient of $x^n$ in $(b_{1}x+b_{2}x^{2}+\cdots )^{k}$ "by inspection", and the second form is then obtained by collecting like terms, or alternatively, by applying the multinomial theorem.

The product of two Riordan arrays is a Riordan array. Sprugnoli and one of the authors \cite{HS} study the problem of how do the $A$-sequence and $Z$-sequence of the product depend on the analogous sequences of the two factors. More precisely, consider two proper Riordan arrays  $D_1=(d_1,h_1)$ and $D_2=(d_2,h_2)$ and their product, 

\[
D_3=D_1 D_2=(d_1d_2(h_1), h_2(h_1)).
\]

\begin{theorem}\label{thm:0.1}\cite{HS}
Let $A_i(z)$, $i=1,2,$ and $3$, be the generating functions of $A$-sequences of $D_i$, $i=1,2,$ and $3$, respectively. Then 

\[
A_3(z)=A_2(z)A_1\left( \frac{z}{A_2(z)}\right).
\]
\end{theorem}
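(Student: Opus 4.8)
The plan is to work directly from the functional-equation characterization of the $A$-sequence recorded in \eqref{eq:1.2}: for a Riordan array $(d,h)$ with $A$-sequence generating function $A$ one has $h(z)=zA(h(z))$. The first observation is that this characterization involves only the second component $h$, so the factors $d_1,d_2$ play no role and it suffices to analyze the composite second component $h_3=h_2\circ h_1$ of the product $D_3=D_1D_2$ from \eqref{Proddef}.

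The heart of the argument is a short chain of substitutions into the three defining relations $h_i(z)=zA_i(h_i(z))$. First I would use that the identity $h_2(w)=wA_2(h_2(w))$ holds for every $w$ and set $w=h_1(z)$; since $h_3(z)=h_2(h_1(z))$ this yields $h_3(z)=h_1(z)\,A_2(h_3(z))$. Feeding in the relation $h_1(z)=zA_1(h_1(z))$ for the first factor then gives $h_3(z)=z\,A_1(h_1(z))\,A_2(h_3(z))$. Comparing with $h_3(z)=zA_3(h_3(z))$ and cancelling the common factor $z$, I obtain $A_3(h_3(z))=A_1(h_1(z))\,A_2(h_3(z))$. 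The one remaining substitution is to eliminate $h_1(z)$: solving the relation $h_3(z)=h_1(z)A_2(h_3(z))$ for $h_1(z)=h_3(z)/A_2(h_3(z))$ and inserting it produces
\[
A_3(h_3(z))=A_2(h_3(z))\,A_1\!\left(\frac{h_3(z)}{A_2(h_3(z))}\right).
\]

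To finish, I would pass from this identity in $z$ to the claimed identity in a free variable. Since $h_3\in\mathcal F_1$ is invertible under composition, substituting $z=\bar h_3(u)$ (equivalently, treating $u=h_3(z)$ as an independent indeterminate) turns the display into $A_3(u)=A_2(u)A_1(u/A_2(u))$, which is exactly the assertion. I expect the only genuinely delicate point to be the legitimacy of this last step together with the well-definedness of the right-hand side: one must note that $A_2$ has nonzero constant term $a_0\neq 0$, so that $u/A_2(u)\in\mathcal F_1$ and the composition $A_1(u/A_2(u))$ is a bona fide formal power series, and that composing with the invertible series $h_3$ is injective, so an identity valid after substituting $h_3(z)$ forces the underlying identity in $u$. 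Everything else is routine manipulation of the functional equations.
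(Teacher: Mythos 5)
Your argument is correct. The paper itself states this theorem without proof, citing \cite{HS}, so there is no in-text argument to compare against; but your derivation from the functional equation \eqref{eq:1.2} is complete and each step is legitimate: setting $w=h_1(z)$ in $h_2(w)=wA_2(h_2(w))$ gives $h_3=h_1\cdot A_2(h_3)$, the cancellation of $z$ is valid because $\mathbb{K}[\![z]\!]$ is an integral domain, division by $A_2(h_3(z))$ is permitted since $A_2(0)\neq 0$ makes it a unit, and substitution of the compositionally invertible $h_3$ is injective, so the identity in $u$ follows. For comparison, the route the paper's own setup in \eqref{0.1} points toward is to write $A_i(z)=z/\bar h_i(z)$, use $\bar h_3=\bar h_1\circ\bar h_2$, and compute
\[
A_3(z)=\frac{z}{\bar h_1(\bar h_2(z))}=\frac{z\,A_1(\bar h_2(z))}{\bar h_2(z)}=A_2(z)\,A_1\!\left(\frac{z}{A_2(z)}\right),
\]
using $\bar h_2(z)=z/A_2(z)$. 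The two arguments are essentially dual: yours manipulates the direct functional equations $h_i=zA_i(h_i)$ and then inverts at the end, while the explicit-formula route works with the compositional inverses from the start and avoids the final inversion step. Either is acceptable; yours has the mild advantage of never needing the closed form $A=z/\bar h$, only the defining recurrence.
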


Let $h\in {\cal F}_1$. We define 

\begin{equation}\label{0.1}
A_h(z)=\frac{z}{\bar h(z)}
\end{equation}
where $\bar h$ is the compositional inverse of $h$. Thus, from Theorem \ref{thm:0.1}, we have the following result.

\begin{theorem}\label{thm:0.2}
Let $A_h$ be defined by \eqref{0.1}, and let $h,f\in {\cal F}_1$. Then 

\[
A_{h\circ f}(z)=A_f(z)A_h\left( \frac{z}{A_f(z)}\right).
\]
\end{theorem}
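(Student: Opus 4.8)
The plan is to recognize the abstract function $A_h$ of \eqref{0.1} as an honest $A$-sequence and then quote Theorem~\ref{thm:0.1}. The crucial first observation is that the $A$-sequence of a proper Riordan array $(d,h)$ is completely determined by its second component $h$, and not by $d$: the characterization \eqref{eq:1.2} reads $z=\bar h(z)A(z)$, so the generating function of the $A$-sequence is $A(z)=z/\bar h(z)$, which is precisely $A_h(z)$. In particular $A_h$ is the $A$-sequence generating function of the Lagrange array $(1,h)\in{\cal L}$; likewise $A_f$ belongs to $(1,f)$ and $A_{h\circ f}$ to the Lagrange array whose second component is the composite $h\circ f$.

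Next I would form the product of the two Lagrange arrays and read off the second component. Taking $D_1=(1,h)$ and $D_2=(1,f)$, formula \eqref{Proddef} gives
\[
D_3=D_1D_2=(1,\,f(h(z))),
\]
whose second component is exactly the composite $h\circ f$ appearing in the statement, so that its $A$-sequence is $A_{h\circ f}$. With $A_1=A_h$ and $A_2=A_f$ the two factors supplied by Theorem~\ref{thm:0.1}, that theorem gives at once
\[
A_{h\circ f}(z)=A_3(z)=A_2(z)\,A_1\!\left(\frac{z}{A_2(z)}\right)=A_f(z)\,A_h\!\left(\frac{z}{A_f(z)}\right),
\]
which is the assertion. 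Thus the whole proof consists of (i) the identity $A(z)=z/\bar h(z)$ for the $A$-sequence of any Riordan array, and (ii) the specialization of Theorem~\ref{thm:0.1} to the associated subgroup.

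The only delicate point is bookkeeping of the composition order: one must orient $D_1$ and $D_2$ so that the second component of $D_1D_2$ is the composite that the statement calls $h\circ f$, and so that the $A$-sequences fall into the correct slots $A_1,A_2$ of Theorem~\ref{thm:0.1}; the opposite choice merely interchanges $h$ and $f$ and yields the companion identity for $A_{f\circ h}$. As an independent check one can verify the formula directly from \eqref{0.1}: since $z/A_f(z)=\bar f(z)$ and $\overline{h\circ f}(z)=\bar h(\bar f(z))$, the right-hand side collapses as
\[
A_f(z)\,A_h\!\left(\frac{z}{A_f(z)}\right)=\frac{z}{\bar f(z)}\cdot\frac{\bar f(z)}{\bar h(\bar f(z))}=\frac{z}{\bar h(\bar f(z))}=\frac{z}{\overline{h\circ f}(z)}=A_{h\circ f}(z),
\]
confirming the result and fixing the composition convention.
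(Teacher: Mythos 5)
Your approach is exactly the paper's: the paper offers no separate proof of Theorem \ref{thm:0.2} beyond the remark that it follows from Theorem \ref{thm:0.1}, and your specialization to the Lagrange arrays $(1,h)$, $(1,f)$ together with the observation that $A_h(z)=z/\overline{h}(z)$ is the $A$-sequence generating function of any Riordan array with second component $h$ is precisely the intended derivation; the direct verification at the end is a worthwhile addition.

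One point needs care, however. By \eqref{Proddef}, $D_1D_2=(1,h)(1,f)=(1,f(h(z)))$, and under the convention the paper itself uses in the proof of Proposition \ref{pro:0.8} (where $(h/z,h)(f/z,f)$ is written as $((f\circ h)/z,\,f\circ h)$), the series $f(h(z))$ is $(f\circ h)(z)$, not $(h\circ f)(z)$. For the same reason your check invokes $\overline{h\circ f}=\overline{h}\circ\overline{f}$, whereas under that convention $\overline{h\circ f}=\overline{f}\circ\overline{h}$. What your argument (and the direct computation) actually establishes is
\[
A_{f\circ h}(z)=A_f(z)\,A_h\!\left(\frac{z}{A_f(z)}\right),
\]
which coincides with the printed statement only if one reads $h\circ f$ as ``first $h$, then $f$.'' This is an inconsistency you share with the theorem as stated rather than a defect peculiar to your reasoning, but you should either announce the left-to-right composition convention explicitly or interchange $h$ and $f$ so that the identity is asserted for $A_{f\circ h}$; as written, the labeling step and the inverse-of-a-composite step are both false under the standard convention even though the final formula is the one the paper intends.
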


\begin{corollary}\label{cor:0.3}
Let $A_h$ be defined by \eqref{0.1}, and let $h,f\in {\cal F}_1$. Then $h\circ f=f\circ h$ if and only if 

\begin{equation}\label{0.2}
A_f(z)A_h\left( \frac{z}{A_f(z)}\right)=A_h(z)A_f\left( \frac{z}{A_h(z)}\right).
\end{equation}
Particularly, $h\circ f=f\circ h=z$ if and only if 

\begin{equation}\label{0.3}
A_f(z)A_h\left( \frac{z}{A_f(z)}\right)=A_h(z)A_f\left( \frac{z}{A_h(z)}\right)=1.
\end{equation}
\end{corollary}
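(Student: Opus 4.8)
The plan is to reduce both equivalences to a single application of Theorem \ref{thm:0.2}, combined with the observation that the assignment $h\mapsto A_h$ is injective on ${\cal F}_1$. First I would record why this injectivity holds. By the defining relation \eqref{0.1} we may recover $\bar h$ from $A_h$ through $\bar h(z)=z/A_h(z)$; since $\bar h$ is the (unique) compositional inverse of $h$, the series $h$ itself is then determined. Hence, for $g,h\in{\cal F}_1$, we have $g=h$ if and only if $A_g=A_h$ as formal power series. I would also note in passing that the composition of two order-$1$ series is again of order $1$, so that $h\circ f$ and $f\circ h$ both lie in ${\cal F}_1$ and their $A$-sequences are legitimately defined.

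With this in place, the first statement is immediate. Applying Theorem \ref{thm:0.2} once as stated and once with the roles of $h$ and $f$ interchanged gives
\[
A_{h\circ f}(z)=A_f(z)A_h\!\left(\frac{z}{A_f(z)}\right),\qquad A_{f\circ h}(z)=A_h(z)A_f\!\left(\frac{z}{A_h(z)}\right).
\]
By the injectivity just established, $h\circ f=f\circ h$ holds precisely when $A_{h\circ f}=A_{f\circ h}$, which is exactly \eqref{0.2}.

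For the particular case I would first compute the $A$-sequence of the identity series $\mathrm{id}(z)=z$: its compositional inverse is again $z$, so \eqref{0.1} yields $A_{\mathrm{id}}(z)=z/z=1$. Recalling that in ${\cal F}_1$ a one-sided compositional inverse is automatically two-sided, the condition $h\circ f=f\circ h=z$ is equivalent to $h\circ f=z$, i.e.\ to $A_{h\circ f}=A_{\mathrm{id}}=1$. By the displayed expression for $A_{h\circ f}$ this reads $A_f(z)A_h(z/A_f(z))=1$; symmetrically, $A_{f\circ h}=1$ reads $A_h(z)A_f(z/A_h(z))=1$. Together these give \eqref{0.3}.

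The only point requiring genuine care — and the step I would treat as the crux — is the injectivity of $h\mapsto A_h$, since everything else is a direct substitution into Theorem \ref{thm:0.2}. This injectivity rests on two standard facts that I would verify explicitly: that $\bar h\in{\cal F}_1$, so $z/\bar h(z)$ is a genuine formal power series of order $0$ with nonzero constant term $1/h_1$, and that the compositional inverse in ${\cal F}_1$ is unique. I therefore expect no serious obstacle; the content of the corollary is really the bookkeeping of transporting the commutation relation (respectively, the inversion relation) for $\circ$ through the bijection $h\leftrightarrow A_h$ furnished by Theorem \ref{thm:0.2}.
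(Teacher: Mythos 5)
Your proof is correct and follows exactly the route the paper intends: the paper states this corollary without proof, as an immediate consequence of Theorem \ref{thm:0.2} applied to both $A_{h\circ f}$ and $A_{f\circ h}$. Your explicit justification of the injectivity of $h\mapsto A_h$ via $\bar h(z)=z/A_h(z)$ and the uniqueness of compositional inverses in ${\cal F}_1$ is precisely the point the paper leaves tacit, and you handle it correctly, including the computation $A_{\mathrm{id}}=1$ for the special case \eqref{0.3}.
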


\begin{proposition}\label{pro:0.8}
Let $B$ be the Bell subgroup of the Riordan group, and let $(h/z,h)\in B$. Then the centralizer of $(h/z,h)$ in the subgroup $B$ is 

\[
C_B((h/z,h))=\{ (f/z,f)\in B: h\circ f=f\circ h\}.
\]
\end{proposition}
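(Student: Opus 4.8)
The plan is to exploit that the Bell subgroup $B$ is parametrized bijectively by ${\cal F}_1$ via the correspondence $f\mapsto (f/z,f)$, and that multiplication in $B$ translates into composition of the associated series. First I would record that every element of $B$ is of the form $(f/z,f)$ for a unique $f\in{\cal F}_1$: a Bell element $(d,zd)$ with $d\in{\cal F}_0$ has $zd\in{\cal F}_1$, and putting $f=zd$ recovers $d=f/z$, so the map is a bijection onto $B$. Hence determining $C_B((h/z,h))$ reduces to deciding, for each $f\in{\cal F}_1$, whether $(f/z,f)$ commutes with $(h/z,h)$.

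The key step is a single application of the product rule \eqref{Proddef}. Taking $d_1=h/z$, $h_1=h$, $d_2=f/z$, $h_2=f$ gives
\[
(h/z,h)(f/z,f)=\left(\frac{h}{z}\cdot\frac{f(h)}{h},\,f(h)\right)=\left(\frac{f\circ h}{z},\,f\circ h\right),
\]
since $d_2(h_1)=f(h)/h$ and the factor $h$ cancels. Interchanging the roles of $f$ and $h$ gives, symmetrically,
\[
(f/z,f)(h/z,h)=\left(\frac{h\circ f}{z},\,h\circ f\right).
\]
Thus the two products are precisely the Bell elements associated with $f\circ h$ and $h\circ f$.

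Finally I would invoke the uniqueness of the parametrization once more: two Bell elements agree if and only if their associated series coincide. Comparing the two displays, $(h/z,h)$ and $(f/z,f)$ commute if and only if $f\circ h=h\circ f$, which is exactly the defining condition of the asserted centralizer. Equivalently, $f\mapsto(f/z,f)$ is an anti-isomorphism from the composition group $({\cal F}_1,\circ)$ onto $B$, carrying the relation $h\circ f=f\circ h$ to commutation in $B$.

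I do not expect a genuine obstacle: the whole argument hinges on one unwinding of \eqref{Proddef}. The only point that demands care is bookkeeping the order of composition---checking that $d_2(h_1)$ really yields $f(h)/h$ with the correct cancellation, and that both products are read off as Bell elements rather than as generic Riordan arrays.
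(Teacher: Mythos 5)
Your proposal is correct and follows essentially the same route as the paper: both proofs unwind the product rule \eqref{Proddef} to identify $(h/z,h)(f/z,f)$ with $((f\circ h)/z,\,f\circ h)$ and symmetrically for the other order, so that commutation is equivalent to $h\circ f=f\circ h$. If anything, your version is slightly more complete, since the paper's proof only writes out the ``if'' direction and leaves the converse (which follows from the same computation plus uniqueness of the Bell parametrization) implicit.
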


\begin{proof}
Let $(f/z,f)\in B$ with $h\circ f=f\circ h$. Then 

\[
(h/z,h)(f/z,f)=((f\circ h)/z,f\circ h)=((h\circ f)/z, h\circ f)=(f/z,f)(h/z,h),
\]
completing the proof.
\end{proof}

\begin{theorem}\label{thm:0.9}
Let $S=\{(1/(1-z), z/(1-z))\}$, and let $B$ be the Bell subgroup of the Riordan group. Then the centralizer 

\begin{equation}\label{0.4}
C_B(S)=\{(1/(1-rz), z/(1-rz)): r\in {{\mathbb R}}\}.
\end{equation}
\end{theorem}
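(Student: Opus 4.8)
The plan is to reduce the statement to a single functional equation for the $A$-sequence and then solve it. First I would observe that the generator of $S$ is the Bell array $(h/z,h)$ with $h(z)=z/(1-z)$, and that the set on the right of \eqref{0.4} consists precisely of the Bell arrays $(f/z,f)$ with $f(z)=z/(1-rz)$, since then $f/z=1/(1-rz)$. Hence, by Proposition \ref{pro:0.8}, the theorem is equivalent to the purely compositional claim that an $f\in{\cal F}_1$ satisfies $h\circ f=f\circ h$ if and only if $f(z)=z/(1-rz)$ for some $r\in{\mathbb R}$.

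Next I would compute $A_h$ and invoke Corollary \ref{cor:0.3}. Inverting $h(z)=z/(1-z)$ gives $\bar h(z)=z/(1+z)$, so by \eqref{0.1} we have $A_h(z)=z/\bar h(z)=1+z$. Substituting $A_h(w)=1+w$ into the commuting criterion \eqref{0.2}, the factor $A_h(z/A_f(z))=1+z/A_f(z)$ collapses the left-hand side to $A_f(z)+z$, so the criterion becomes the functional equation
\[
A_f(z)+z=(1+z)\,A_f\!\left(\frac{z}{1+z}\right).
\]
Thus $f$ commutes with $h$ exactly when its $A$-sequence generating function solves this equation; conversely every solution $A_f$ with $A_f(0)\neq 0$ determines a unique $f\in{\cal F}_1$ through $\bar f(z)=z/A_f(z)$, so it suffices to determine all such $A_f$.

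It remains to solve the functional equation, and this is where the main work lies. The easy direction is a direct check that $A_f(z)=1+rz$ is a solution for every $r$. For the converse (uniqueness) I would write $A_f(z)=\sum_{n\ge 0}a_nz^n$, expand the right-hand side as $(1+z)A_f(z/(1+z))=\sum_{n\ge0} a_n z^n(1+z)^{1-n}$ by the binomial series, and compare the coefficient of $z^m$ on both sides. After cancelling the $n=m$ term, this yields $[m=1]=\sum_{n=0}^{m-1}a_n\binom{1-n}{m-n}$. The $m=1$ comparison forces $a_0=1$ (so $f$ is tangent to the identity), the $m=2$ comparison imposes no condition and leaves $a_1=r$ free, and for $m\ge 3$, using $\binom{1-0}{m}=\binom{1-1}{m-1}=0$, only the terms $2\le n\le m-1$ survive, giving $(2-m)\,a_{m-1}=-\sum_{n=2}^{m-2}(-1)^{m-n}\binom{m-2}{m-n}a_n$. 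An induction on $m$ then forces $a_n=0$ for all $n\ge 2$, so $A_f(z)=1+rz$. Recovering $f$ from $\bar f(z)=z/(1+rz)$ gives $f(z)=z/(1-rz)$, and translating back through $(f/z,f)$ produces exactly the arrays $(1/(1-rz),z/(1-rz))$, establishing \eqref{0.4}. The delicate point I expect to be the obstacle is verifying rigorously, via the vanishing of $\binom{1-n}{m-n}$ for $n=0,1$ and the nonvanishing of the leading coefficient $2-m$ for $m\ge 3$, that the recursion annihilates every $a_n$ with $n\ge2$ while leaving $a_1$ completely unconstrained.
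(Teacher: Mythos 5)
Your proposal is correct and follows the paper's own route: both reduce the theorem via Proposition \ref{pro:0.8} and Corollary \ref{cor:0.3} to the functional equation $A_f(z)+z=(1+z)A_f\left(\frac{z}{1+z}\right)$ with $A_h(z)=1+z$, and then solve it by coefficient comparison to conclude $A_f(z)=1+rz$, hence $f(z)=z/(1-rz)$. The only difference is bookkeeping: the paper first substitutes $z\mapsto z/(1-z)$ to rewrite the equation as $(1-z)A_f(z/(1-z))+z=A_f(z)$ and extracts coefficients with the Lagrange Inversion Formula, whereas you expand $(1+z)A_f(z/(1+z))=\sum_{n\ge 0}a_nz^n(1+z)^{1-n}$ directly by the binomial series with negative exponents --- a slightly more elementary computation that arrives at an equivalent recursion annihilating every $a_n$ with $n\ge 2$ while leaving $a_1$ free.
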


\begin{proof}
From Proposition \ref{pro:0.8}, to obtain \eqref{0.2}, it is sufficient to prove the set of all elements in ${\cal F}_1$ that commute  with $h=z/(1-z)$ is 

\begin{equation}\label{0.5}
E=\{ z/(1-rz): r\in {{\mathbb R}}\}.
\end{equation}
Let $f\in{\cal F}_1$, and let $A_f$ and  $A_h$ be defined by \eqref{0.1}. Since $\bar h=z/(1+z)$, $A_h(z)=(1+z)$.  From Corollary \ref{cor:0.3}, $h\circ f=f\circ h$ if and only if \eqref{0.2} holds, i.e., 

\[
A_f(z)\left( 1+\frac{z}{A_f(z)}\right)=(1+z)A_f\left( \frac{z}{1+z}\right),
\]
or equivalently, 

\[
A_f(z)+z=(1+z)A_f\left( \frac{z}{1+z}\right).
\]
Substituting $z=z/(1-z)$ into above equation yields 

\begin{equation}\label{0.6}
(1-z)A_f\left( \frac{z}{1-z}\right)+z=A_f(z).
\end{equation}
Denote $A_f(z)=\sum_{k\geq 0} a_kz^k$ and substitute it into \eqref{0.6}: 

\[
(1-z)(a_0+a_1(z+z^2+\cdots)+a_2(z+z^2+\cdots)^2+\cdots)=a_0+a_1z+a_2z^2+\cdots.
\]
By comparing the coefficients of  powers $z^0$, $z^1$, and $z^2$ on both sides of the resulting equation, we obtain $a_0=a_0$, 

\[
-a_0+a_1+1=a_1\quad \mbox{and} \quad a_1-a_1+a_2=a_2,
\]
which implies $a_0=1$ and $a_1$ is free. To determine $a_n$ for $n\geq 2$, we use the Lagrange Inversion Formula 

\[
[z^n]h(g(z))=\frac{1}{n}[z^{-1}]\frac{h'(z)}{\bar g^n}
\]
for $n\geq 3$ to $A_f(z/(1-z))$ in \eqref{0.6}, where $g\in {\cal F}_1$ and $\bar g$ is the compositional inversion of $g$. Then we can write the coefficients of $z^n$ on the lefthand side of \eqref{0.6} as

\begin{eqnarray}\label{0.8}
&&[z^n] A_f \left( \frac{z}{1-z}\right) -[z^{n-1}]A_f\left( \frac{z}{1-z}\right)\nonumber \\
&=&\frac{1}{n}[z^{-1}]\frac{A_f'(Z)}{\left(\frac{z}{1+z}\right)^n}-\frac{1}{n-1}[z^{-1}]\frac{A_f'(Z)}{\left( \frac{z}{1+z}\right)^{n-1}}\nonumber\\
&=&\frac{1}{n}[z^{n-1}](1+z)^nA_f'(z)-\frac{1}{n-1}[z^{n-2}](1+z)^{n-1}A_f'(Z)\nonumber\\
&=&\frac{1}{n}\sum^{n-1}_{k=0}\binom{n}{k}(n-k)a_{n-k}-\frac{1}{n-1}\sum^{n-2}_{k=0}\binom{n-1}{k}(n-k-1)a_{n-k-1}\nonumber\\
&=&\sum^{n-1}_{k=0}\binom{n-1}{k}a_{n-k}-\sum^{n-2}_{k=0}\binom{n-2}{k}a_{n-k-1}\nonumber\\
&=&a_n+\sum^{n-1}_{k=1}\left( \binom{n-1}{k}-\binom{n-2}{k-1}\right) a_{n-k}\nonumber\\
&=&a_n+\sum^{n-2}_{k=1}\binom{n-2}{k}a_{n-k}.
\end{eqnarray}
Comparing the coefficients of $z^n$ on both sides of \eqref{0.6}, we have 

\[
a_n+\sum^{n-2}_{k=1}\binom{n-2}{k}a_{n-k}=a_n,
\]
or equivalently,

\[
\sum^{n-2}_{k=1}\binom{n-2}{k}a_{n-k}=0.
\]
Using the above equation and noting $a_2=0$ when $n=3$, we may prove all $a_n=0$ for $n\geq 2$ inductively. Thus $A_f(t)=1+a_1t$ and $f(t)=t/(1-rt)$ for $r=a_1\in {{\mathbb R}}$. 
\end{proof}

We now give a combinatorial proof of Theorem \ref{thm:0.9} based on a combinatorial interpretation of expression of \eqref{0.0-2}. 

\begin{theorem}\label{thm:0.11}
Let $f(x)=\sum _{n=0}^{\infty }{a_{n}}x^{n}$ and $g(x)=\sum _{n=0}^{\infty }{b_{n}}x^{n}$ be formal power series and $b_{0}=0$, and let $f\circ g$ be the composition of $f$ and $g$ shown in \eqref{0.0}, $f(g(x))=\sum _{n=0}^{\infty }{c_{n}}x^{n}$, where $c_0 = a_0$ and the other coefficients $c_n$ for $n \geq 1$ are expressed as a sum over compositions of $n$ or as an equivalent sum over partitions of $n$ shown in \eqref{0.0-2}. Then 

\begin{equation}\label{0.9}
c_{n}=\sum_{k=1}^{n} B_{n,k} a_k,
\end{equation}
where $B_{n,k}=\sum b_{j_1}b_{j_2}...b_{j_k}$, where $|j|=j_1+j_2+...+j_k=n$ and $B_{n,k}$ could be interpreted as the combinations of ways to put $n$ non-distinguished objects into $k$ nonempty distinguished (labeled) boxes. 
\end{theorem}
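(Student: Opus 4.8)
The plan is to obtain \eqref{0.9} directly by regrouping the composition form of $c_n$ according to the number of parts $k$, and then to read off the combinatorial meaning of the resulting inner sum. First I would start from the composition form of the coefficient,
\[
c_n = \sum_{\mathbf{i} \in \mathcal{C}_n} a_k\, b_{i_1} b_{i_2} \cdots b_{i_k},
\]
and observe that the index set $\mathcal{C}_n$ decomposes as the disjoint union, over $k = 1, 2, \ldots, n$, of the sets of compositions of $n$ into exactly $k$ positive parts. The key observation is that the factor $a_k$ depends only on the number of parts $k$ and not on the particular composition, so it can be pulled outside the inner summation over fixed-length compositions.

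Carrying this out, I would set
\[
B_{n,k} = \sum_{\substack{j_1 + j_2 + \cdots + j_k = n \\ j_1, \ldots, j_k \geq 1}} b_{j_1} b_{j_2} \cdots b_{j_k},
\]
so that $c_n = \sum_{k=1}^n a_k B_{n,k} = \sum_{k=1}^n B_{n,k} a_k$, which is exactly \eqref{0.9}. The hypothesis $b_0 = 0$ is what guarantees that no part may equal $0$: any composition containing a zero part contributes a vanishing factor $b_0$ and drops out, so the constraint $j_i \geq 1$ is automatic, and together with $\sum_i j_i = n$ it forces $1 \leq j_i \leq n$, making $B_{n,k}$ a finite sum.

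For the combinatorial interpretation, I would read each admissible tuple $(j_1, \ldots, j_k)$ as an occupancy vector: place $n$ indistinguishable objects into $k$ labeled boxes so that box $i$ receives $j_i \geq 1$ objects, with the monomial $b_{j_1} \cdots b_{j_k}$ weighting that configuration. When all $b_i = 1$, the count reduces to the number of compositions of $n$ into $k$ positive parts, namely $\binom{n-1}{k-1}$ by stars-and-bars, confirming the stated interpretation. Finally, to reconcile $B_{n,k}$ with the partition form \eqref{0.0-2}, I would collect compositions by their underlying multiset of parts: a partition recorded in frequency-of-parts form $(\pi_1, \ldots, \pi_n) \in \mathcal{P}_{n,k}$ arises from exactly $\binom{k}{\pi_1, \ldots, \pi_n}$ distinct compositions, giving
\[
B_{n,k} = \sum_{\pi \in \mathcal{P}_{n,k}} \binom{k}{\pi_1, \ldots, \pi_n} b_1^{\pi_1} \cdots b_n^{\pi_n},
\]
so that \eqref{0.9} agrees with the partition form of \eqref{0.0-2}.

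There is no genuine analytic obstacle here; the content is entirely bookkeeping. The one point that deserves care is the matching of the two equivalent indexings: verifying that grouping the length-$k$ compositions by their underlying partition produces precisely the multinomial coefficient $\binom{k}{\pi_1, \ldots, \pi_n}$, that is, that the rearrangements of a given multiset of $k$ parts are counted with neither over- nor under-counting. Making that correspondence explicit is what legitimizes calling $B_{n,k}$ simultaneously a sum over length-$k$ compositions and the weighted count of occupancy configurations asserted in the statement.
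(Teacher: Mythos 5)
Your proof is correct, and it establishes exactly what the theorem literally asserts: decomposing $\mathcal{C}_n$ by the number of parts $k$, pulling the factor $a_k$ out of each group, and recognizing the inner sum over length-$k$ compositions with positive parts as $B_{n,k}$ gives \eqref{0.9} at once; the multinomial coefficient $\binom{k}{\pi_1,\ldots,\pi_n}$ correctly mediates between the composition form and the partition form \eqref{0.0-2}, and the occupancy reading of the tuples $(j_1,\ldots,j_k)$ is precisely the stated interpretation. This is, however, a genuinely different emphasis from the paper's proof. The paper treats \eqref{0.9} itself as immediate bookkeeping --- it records only sample values such as $B_{2,2}=b_1^2$ and $B_{4,2}=2b_1b_3+b_2^2$ --- and devotes essentially the whole proof environment to an application: using the interpretation of $B_{n,k}$ to give a combinatorial re-derivation of Theorem \ref{thm:0.9}, computing $c_n$ and $\hat{c}_n$ for $h=z/(1-z)$ and a candidate $f$ through $n=5$, conjecturing $b_p=b_2^{p-1}$, and proving it by induction via the observation that the array of the $B_{n,k}$ is a Riordan array with $A$-sequence $(1,b_2,0,0,\ldots)$, which forces $f=z/(1-b_2z)$. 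So your argument supplies the rigorous justification of the identity that the paper elides, while the paper's proof supplies the combinatorial payoff (the alternative proof of Theorem \ref{thm:0.9}) that your argument does not attempt; the two are complementary rather than in conflict, and nothing in your regrouping or in the composition-to-partition correspondence is in error.
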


\begin{proof}
As an example of $B_{n,k}$, $B_{2,2}={b_1}^2$ can be interpreted as combinations of ways to put $2$ objects into $2$ boxes. Similarly, $B_{4,2}=2b_1b_3+{b_2}^2$.  We now give a combinatorial proof of Theorem \ref{thm:0.9}. Let $h(t)=\frac{1}{1-t}=\sum_{k=1}^{\infty}a_kt^k$ where $a_k=1$ for all $k \in \mathbb{N}$. We want to find elements $f$ of $C_{B}(h)$, $f(t)=\sum_{k=1}^{\infty} b_kt^k$ where $b_1=1$ such that $h(f)=f(h)$. 
	Let $h(f)=\sum_{k=1}^{\infty} c_kt^k$ and $f(h)=\sum_{k=1}^{\infty} \hat{c}_kt^k$. We first compute the first few terms.
	\bigbreak
	For $k=1$, we have
	\begin{equation*}
		c_1=B_{1,1}a_1=b_1a_1.
	\end{equation*}
	Switch the terms accordingly, we obtain 
	\begin{equation*}
		\hat{c}_1=a_1b_1.
	\end{equation*}
	Then $c_1=\hat{c}_1$ naturally.
	\smallbreak
	For $k=2$, we have
	\begin{equation*}
	c_2=B_{2,1}a_1+B_{2,2}a_2=b_2a_1+{b_1}^2a_2,
	\end{equation*}
	\begin{equation*}
	\hat{c}_2=a_2b_1+{a_1}^2b_2.
	\end{equation*}
	Plugging in $a_k=1$ and $b_1=1$, and then equate $c_2$ and $\hat{c}_2$ we have
	\begin{equation*}
	b_2+1=1+b_2.
	\end{equation*}
	Therefore, we conclude that $b_2$ is free.
	\smallbreak
	For $k=3$, we have
	\begin{equation*}
	\begin{split}
	c_3 & = B_{3,1}a_1+B_{3,2}a_2+B_{3,3}a_3 \\
	& = a_1b_3+a_2(b_1b_2+b_2b_1)+a_3{b_1}^3 \\
	& = a_1b_3+2a_2b_1b_2+a_3{b_1}^3,
	\end{split}
	\end{equation*}
	\begin{equation*}
	\hat{c}_3=b_1a_3+2b_2a_1a_2+b_3{a_1}^3.
	\end{equation*}
	Equate $c_3$ and $\hat{c}_3$, the equation holds naturally.
	\smallbreak
	For $k=4$, we have
	\begin{equation*}
	\begin{split}
	c_4 & = B_{4,1}a_1+B_{4,2}a_2+B_{4,3}a_3+B_{4,4}a_4 \\
	& = a_1b_4+a_2(2b_1b_3+{b_2}^2)+a_3(3b_1b_1b_2)+a_4{b_1}^4 \\
	& = a_1b_4+2a_2b_1b_3+a_2{b_2}^2+3a_3b_1b_1b_2+a_4{b_1}^4,
	\end{split}
	\end{equation*}
	\begin{equation*}
	\hat{c}_4=b_1a_4+2b_2a_1a_3+b_2{a_2}^2+3b_3a_1a_1a_2+b_4{a_1}^4.
	\end{equation*}
	Plugging in $a_k=1$ and $b_1=1$, and then equate $c_4$ and $\hat{c}_4$ we have
	\begin{equation*}
	b_4+2b_3+{b_2}^2+3b_2+1=1+2b_2+b_2+3b_3+b_4,
	\end{equation*}
	\begin{equation*}
	2b_3+{b_2}^2=3b_3.
	\end{equation*}
	Therefore, we conclude that $b_3={b_2}^2$.
	\smallbreak
	For $k=5$, we have
	\begin{equation*}
	\begin{split}
	c_5 & = B_{5,1}a_1+B_{5,2}a_2+B_{5,3}a_3+B_{5,4}a_4+B_{5,5}a_5 \\
	& = a_1b_5+a_2(2b_1b_4+2b_2b_3)+a_3({3b_1}^2b_3+3b_1{b_2}^2)+a_4(4{b_1}^3b_2)+a_5{b_1}^5,
	\end{split}
	\end{equation*}
	\begin{equation*}
	\hat{c}_5=b_1a_5+b_2(2a_1a_4+2a_2a_3)+b_3(3{a_1}^2a_3+3a_1{a_2}^2)+b_4(4{a_1}^3a_2)+b_5{a_1}^5.
	\end{equation*}
	Equate $c_5$ and $\hat{c}_5$ we have
	\begin{equation*}
	2b_4+2b_2b_3+3b_3+3{b_2}^2=6b_3+4b_4.
	\end{equation*}
	Therefore, we conclude that $b_4={b_2}^3$.
	
	\bigbreak
	Base on the computational results above, we make the conjecture that $b_p={b_2}^{p-1}$ for all $p \in \mathbb{N}$ and $p \geq 2$. We prove this result by induction.

For $p=1$, the result holds naturally as demonstrated above. Suppose now the result holds up to $p=q$. Since $a_k=1$ for all $k \in \mathbb{N}$, as a matter of simplicity, we put the coefficients $B_{n,k}$ as the entries of a lower triangular matrix below,
	
	\begin{equation*}
	\begin{pmatrix} 
	1   \\
	b_2 & 1 \\
	{b_2}^2 & 2b_2 & 1 \\
	{b_2}^3 & 3{b_2}^2 & 3b_2 & 1 \\
	\vdots &  &  &  & \ddots \\
	{b_2}^q & q{b_2}^{q-1} & \cdots  & & \cdots & 1 \\
	\end{pmatrix}.
	\end{equation*}
	
Observe that the matrix above is a Riordan array with A-sequence $a_0=1, a_1=b_2$.

Now we want to show that the result holds for $p=q+1$. It is sufficient to show that adding another row and column to the matrix above, we still obtain a Riordan array preserving the same A-sequence. First, we consider the combinatorial interpretation of $B_{q+1,k}$ for $k \geq 3$. If we have $k$ boxes and $q+1$ objects, each of the boxes contains at most $q-k+2$ objects, i.e., the subscript of the terms that appear in $B_{q+1,k}$ is at most $q-k$. Since $q-k +2< q$, by induction, we conclude that the $k$th entries of the $q+1$th row preserve the same A-sequence for $k \geq 3$. 
	
It can be seen that the matrix of the size $(q+1)\times (q+1)$ is uniquely determined. Now we need to check if the result holds for the two entries $B_{q+1,1}$ and $B_{q+1,2}$. Again, by the combinatorial interpretation, if we contract the objects of the entry $B_{q,1}$ into one, the entry $B_{q+1,1}$ can be interpreted as adding an object to the same box. Then it follows that $B_{q+1,1}=B_{q,1} \times b_2$. Similarly, the entry $B_{q+1,2}$ can be obtained by adding one object and one box to $B_{q,1}$ and adding one object into one existing box of $B_{q,2}$. It follows that $B_{q+1,2}=B_{q,1}+B_{q,2}\times b_2$.
	
Therefore, we conclude that the result indeed holds. Furthermore, if $b_p={b_2}^{p-1}$, the corresponding power series is indeed $\frac{t}{1-b_2t}$. 
\end{proof}

We now return to the problem raised in Example $1.1$. 

\begin{theorem}\label{thm:4.5}
Let $S$ be a subset of Appell subgroup ${{\cal A}}$ of the Riordan group ${{\cal R}}$ defined by $S=\{ (g,f): (g,f)\in {{\cal A}}, g'(0)\not= 0\}$. Then ${C}_{{\cal R}}(S)={{\cal A}}$. Particularly, if $(g,f)$ does not satisfy the condition $g'(0)\not=0$, then $C_{{\cal R}}$ contains elements that do not belong to ${{\cal A}}$. In general, $C_{{\cal R}}((g,t), g\in {\cal F}_0)=A^{\pm}$, the double Appell subgroup of ${{\cal R}}$.  
\end{theorem}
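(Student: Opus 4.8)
The plan is to convert the defining commutation relations into a single functional identity between formal power series and then read off the second coordinate $h$ coefficient by coefficient. Note first that every element of the Appell subgroup has second component $z$, so $S=\{(g,z):\,g\in{\cal F}_0,\ g'(0)\neq 0\}$. Taking an arbitrary $(d,h)\in{\cal R}$ and $(g,z)\in S$, I would apply the product rule \eqref{Proddef} to compute
\[
(d,h)(g,z)=(d\cdot g(h),\,h),\qquad (g,z)(d,h)=(g\cdot d,\,h),
\]
whose second coordinates already coincide. Hence $(d,h)$ commutes with $(g,z)$ exactly when $d(z)\,g(h(z))=g(z)\,d(z)$; and since $d\in{\cal F}_0$ has $d(0)\neq0$ it is a unit in ${\cal F}$, so cancelling $d$ leaves the clean requirement
\[
g(h(z))=g(z)\qquad\text{for every }(g,z)\in S.
\]

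The heart of the argument is that the hypothesis $g'(0)\neq 0$ lets a single test series pin down $h$. I would either specialize to $g(z)=1+z\in S$, which turns $g(h)=g$ into $1+h=1+z$ and forces $h=z$ at once, or—to make transparent where $g'(0)\neq0$ is used—argue for general $g$ with $g_1:=g'(0)\neq0$ by writing $h=h_1z+h_2z^2+\cdots$ (recall $h\in{\cal F}_1$) and comparing coefficients in $g(h)=g$: the $z^1$ term gives $g_1h_1=g_1$, hence $h_1=1$, and inductively the $z^n$ term gives $g_1h_n+P_n(h_1,\dots,h_{n-1})=g_n$ with the lower coefficients already matched, forcing $h_n=0$. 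Dividing by $g_1$ at each stage is exactly the step that breaks down when $g_1=0$. Either way $h=z$, so $C_{\cal R}(S)\subseteq{\cal A}$.

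For the reverse inclusion I would use that ${\cal A}$ is abelian: $(d,z)(g,z)=(dg,z)=(gd,z)=(g,z)(d,z)$ for all $d\in{\cal F}_0$, whence ${\cal A}\subseteq C_{\cal R}(S)$ and therefore $C_{\cal R}(S)={\cal A}$. For the remaining claims I would drop the hypothesis $g'(0)\neq0$ and revisit $g(h)=g$. When $g$ is even with $g''(0)\neq0$ I would write $g(z)=G(z^2)$ where $G\in{\cal F}_0$ has $G'(0)=\frac{1}{2}g''(0)\neq0$; then $g(h)=g$ reads $G(h^2)=G(z^2)$, and applying the first-coordinate argument to $G$ (which now has nonzero linear term) gives $h^2=z^2$, i.e. $h=\pm z$. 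Since $d\in{\cal F}_0$ stays arbitrary, this yields exactly the double Appell subgroup $A^{\pm}=\{(d(z),\pm z)\}$, recovering Example 1.1 with $g=1+z^2$ and establishing the final assertion.

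I expect the only delicate point to be the last clause: the equality with $A^{\pm}$ requires $g$ to be even with a nonvanishing quadratic term (so that $G'(0)\neq0$), and for a completely general $g\in{\cal F}_0$ lacking a linear term the centralizer need not reduce to $A^{\pm}$; I would therefore state this hypothesis explicitly. No genuine obstruction arises elsewhere—the argument rests entirely on the invertibility of $d$ in ${\cal F}$ and the inductive coefficient comparison.
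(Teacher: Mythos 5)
Your proposal is correct and follows essentially the same route as the paper: reduce commutation to the functional equation $g(h(z))=g(z)$ (after cancelling the unit $d$), then use $g'(0)\neq 0$ and an inductive coefficient comparison to force $h_1=1$ and $h_n=0$ for $n\geq 2$, with the reverse inclusion coming from the fact that Appell arrays commute among themselves. Your treatment of the final clause is in fact more careful than the paper's (which simply appeals to Example~1.1 for $g=1+z^2$): the substitution $g(z)=G(z^2)$ with $G'(0)\neq 0$ cleanly yields $h^2=z^2$, hence $h=\pm z$, and your caveat that the ``in general'' claim requires an evenness/nonvanishing-quadratic hypothesis on $g$ is a legitimate observation about an imprecision in the statement as written.
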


\begin{proof}
Let $(g,f)\in S\subset {{\cal A}}$ with $g'(0)\not= 0$. Then $(g,f)=(g,t)$ and $g=\sum_{k\geq 0}g_kt^k$ with $g_0, g_1\not= 0$. A Riordan array $(d,h)\in {{\cal C}}_{{\cal R}}(S)$ with $h=\sum_{k\geq 0} h_k t^k$ if and only if 
\[
(d,h)(g,t)=(g,t)(d,h);\,\, i.e.,\,\, (d(g\circ h), h)=(gd,h).
\]
Hence, $g(h)=g$. From \eqref{0.0}, we have

\begin{equation}\label{0.0-3}
g(h(t)=\sum_{n\geq 0}c_n t^n,
\end{equation}
where, by denoting ${\mathcal {C}}_{n}=\{(i_{1},i_{2},\dots ,i_{k})\,:\ 1\leq k\leq n,\ i_{1}+i_{2}+\cdots +i_{k}=n\}$, 

\begin{eqnarray*}
&&c_{n}=\sum _{\mathbf {i} \in {\mathcal {C}}_{n}}g_{k}h_{i_{1}}h_{i_{2}}\cdots h_{i_{k}}\nonumber\\
&=&g_1h_n+g_nh_1^n+\sum _{\mathbf {i} \in {\mathcal {C}}'_{n}}g_{k}h_{i_{1}}h_{i_{2}}\cdots h_{i_{k}},
\end{eqnarray*}
in which ${\mathcal {C}}'_{n}=\{(i_{1},i_{2},\dots ,i_{k})\,:\ 2\leq k\leq n-1,\ i_{1}+i_{2}+\cdots +i_{k}=n\}$. It is clear that the summation in the last equation contains terms involving only $h_k$ with $1\leq k\leq n-1$ because 

\[
i_1+i_2+\cdots i_k =n
\]
and $k\geq 2$ imply $1\leq i_j\leq n-1$ for all $j=1,2,\ldots k$. In addition, the summation contains terms involving $g_k$ with $2\leq k\leq n-1$, but not $g_1$ and $g_n$. For instance, 

\begin{eqnarray*}
&&c_1=g_1h_1,\\
&&c_2=g_1h_2+g_2h_1^2\\
&&c_3=g_1h_3+g_3h_1^3+2g_2h_1h_2,\,\ldots 
\end{eqnarray*}
Comparing the coefficients of the same powers of $g(h)=g$ yields 

\begin{equation}\label{0.0-4}
g_n=c_n=g_1h_n+g_nh_1^n+\sum _{\mathbf {i} \in {\mathcal {C}}'_{n}}g_{k}h_{i_{1}}h_{i_{2}}\cdots h_{i_{k}}.
\end{equation}
Therefore, for $n=1$, we have 

\[
g_1=g_1h_1.
\]
Since $g_1\not= 0$, we obtain $h_1=1$. Considering $n=2$, we have 

\[
g_2=c_2=g_1h_2+g_2h^2_1=g_1h_2+g_2,
\]
which implies $h_2=0$. Similarly, we have $h_3=0$. Assume that $h_k=0$ for $2\leq k\leq n-1$. Then from 

\begin{eqnarray*}
&&g_n=c_n=g_1h_n+g_nh_1^n+\sum _{\mathbf {i} \in {\mathcal {C}}'_{n}}g_{k}h_{i_{1}}h_{i_{2}}\cdots h_{i_{k}}\\
&=&g_1h_n+g_n
\end{eqnarray*}
we obtain $h_n=0$. Thus, $h=t$, which means ${C}_{{\cal R}}(S)={{\cal A}}$. Combining the above and Example 1.1, we complete the proof. 
\end{proof}

\medbreak
\noindent{\bf Remark 2.1} In Theorem \ref{thm:4.5}, the set $S$ may be a singleton set. From Example $1.1$, we may know that the condition $g'(0)\not= 0$ 
is necessary to insure that ${C}_{{\cal R}}(S)={{\cal A}}$. In addition, this condition can not be replaced to be $g(z)\not= 1$ since $C_{{\cal R}} (1+z^2,z)={{\cal A}}^{\pm}$, which is shown in Example $1.1$. For instance $(g,-z)\in {C}_{{\cal R}}(1+z^2,z)$. As examples of nontrivial $g$ with $g'(0)=0$, we may see $g(z)=(1-z+z^2)/(1-z)$, $(1-2z-\sqrt{1-4z})/(2z)$, etc.

\section{The subgroup of ${\cal F}_1$ and centralizers of Riordan group}\label{sec:2}

Let $h(z)=z(1+\sum_{i\geq 1} b_i z^i)$ and $g(z)=z(1+\sum_{i\geq 1}a_i z^i)$. Then 
$(g\circ h)(z)=z(1+\sum_{i\geq 1} d_i z^i)$. Successively carrying out multiplications and combining
similar terms, we obtain the following expressions for the canonical coordinates $d_i$:
\begin{eqnarray*}
&&d_1 = a_1 + b_1,  \quad (4_1)\\
&&d_2 = a_2 + b_2 + 2a_1b_1, \quad (4_2)\\
&&d_3 = a_3 + b_3+2a_1b_2 + 3a_2 b_1+ a_1b_1^2,\quad (4_3)\\
&&\cdots \cdots\\
&&d_n= a_n + b_n +\sum^{n-1}_{i=1}\alpha_i\phi_{n-i}(b_1,b_2,\ldots, b_{n-i}).\quad (4_n)
\end{eqnarray*}

The expressions $\phi_r(b_1,b_2, \ldots, b_r)$ are universal homogeneous polynomials of multidegree $r$ with integer coefficients. It can be seen that the formulae $(4_k)$, $k = 1,2,\ldots,n$, define a group structure on the direct product ${{\mathbb K}}^n$. This group will be denoted by ${\mathcal F}^{(n)}_1({{\mathbb K}})$. Clearly, ${\mathcal F}^{(1)}_1({{\mathbb K}})$ is an Abelian group isomorphic to the additive group of the ring ${{\mathbb K}}$. The group ${\mathcal F}^{(2)}_1({\mathbb K})$ is isomorphic to the additive group of ${{\mathbb K}}\oplus {{\mathbb K}}$, and the corresponding isomorphism depends non-linearly on the canonical coordinates $(b_1,b_2)$, etc. (see Babenko \cite{Bab}). 

\begin{proposition}
$({\cal F}_1,\circ)$ is a group with the identity $z$. For any $f\in {\cal F}_1$, the inverse of $f$ is its compositional inverse.
\end{proposition}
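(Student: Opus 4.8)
The plan is to verify the four group axioms for $(\mathcal{F}_1,\circ)$ directly, routing associativity through the iteration-matrix correspondence $f\mapsto(1,f)$ with the Lagrange subgroup $\mathcal{L}$. First I would check that $\circ$ is a well-defined binary operation. If $f(z)=\sum_{k\geq 1}f_kz^k$ and $g(z)=\sum_{k\geq 1}g_kz^k$ both lie in $\mathcal{F}_1$ (so $f_1,g_1\neq 0$ and neither has a constant term), then because $g$ has zero constant term, $g(z)^k$ has order $\geq k$; hence $[z^n](f\circ g)=\sum_{k=1}^n f_k[z^n]g(z)^k$ is a finite sum and $f\circ g$ is a genuine formal power series. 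Its lowest term is $f_1g_1z$, and since $f_1g_1\neq 0$ the series has order exactly $1$, giving closure $f\circ g\in\mathcal{F}_1$. The identity is immediate: substituting $z$ into $f$ returns $f$, and substituting $f$ into the series $z$ returns $f$, so $f\circ z=z\circ f=f$, confirming $z$ is a two-sided identity.

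For associativity I would invoke the iteration-matrix identity already recorded in the excerpt, $(1,f\circ g)=(1,g)*(1,f)$, which says that the map $\Phi\colon f\mapsto(1,f)$ from $\mathcal{F}_1$ into the Lagrange subgroup is an anti-homomorphism onto $\mathcal{L}$. Since Riordan multiplication is ordinary row-by-column multiplication of infinite lower-triangular matrices — each entry a finite sum — it is associative. Applying $\Phi$ to both $(f\circ g)\circ h$ and $f\circ(g\circ h)$ and using the anti-homomorphism property reduces the desired equality to associativity of the matrix product, and injectivity of $\Phi$ (the array $(1,f)$ determines $f$, since the entries of its column indexed $1$ are precisely the coefficients of $f$) then yields $(f\circ g)\circ h=f\circ(g\circ h)$. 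Alternatively, associativity of composition of order-$\geq 1$ formal power series is standard and may simply be cited.

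It remains to produce inverses. Given $f\in\mathcal{F}_1$ I would first construct a right inverse $\bar f(z)=\sum_{k\geq 1}\bar f_kz^k$ by comparing coefficients in $f(\bar f(z))=z$. The only contribution to $[z^n]f(\bar f)$ involving $\bar f_n$ is the term $f_1\bar f_n$ (coming from $k=1$), so $[z^n]f(\bar f)=f_1\bar f_n+P_n(\bar f_1,\ldots,\bar f_{n-1})$ for a polynomial $P_n$ in the lower-index coefficients. Matching the coefficients of $z$ forces $f_1\bar f_1=1$, i.e. $\bar f_1=1/f_1\neq 0$, and then $\bar f_n=-P_n(\bar f_1,\ldots,\bar f_{n-1})/f_1$ for $n\geq 2$; since $f_1\neq 0$ this recursion determines a unique $\bar f\in\mathcal{F}_1$ with $f\circ\bar f=z$. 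To upgrade this to a two-sided inverse I would apply the same construction to $\bar f$, obtaining $\overline{\bar f}$ with $\bar f\circ\overline{\bar f}=z$, and then use associativity together with the identity law: $\bar f\circ f=(\bar f\circ f)\circ z=(\bar f\circ f)\circ(\bar f\circ\overline{\bar f})=\bar f\circ(f\circ\bar f)\circ\overline{\bar f}=\bar f\circ\overline{\bar f}=z$. Hence $\bar f\circ f=f\circ\bar f=z$, so every element is invertible with inverse its compositional inverse, completing the verification that $(\mathcal{F}_1,\circ)$ is a group.

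The main obstacle is associativity: proving it by direct coefficient manipulation is notationally heavy, which is exactly why I would route it through the matrix representation, where it becomes associativity of matrix multiplication. The only other point requiring care is confirming that the recursively built right inverse is genuinely two-sided, which is handled by the standard monoid argument above once associativity is in hand.
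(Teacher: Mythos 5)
Your proof is correct and complete. Note, however, that the paper itself offers no proof of this proposition: it is stated as a known fact about the substitution group of formal power series, with the relevant literature (Jennings, Johnson, Babenko) cited immediately afterward. So there is nothing in the paper to compare against step by step; what you have written is a self-contained verification of a result the authors take as standard. Your treatment of closure and of the recursive construction of the right inverse, together with the monoid argument upgrading it to a two-sided inverse, is exactly the classical argument. The one genuinely nice choice you make is to obtain associativity by passing through the correspondence $f\mapsto(1,f)$ with the Lagrange (associated) subgroup and the identity $(1,f\circ g)=(1,g)*(1,f)$, so that associativity of composition is inherited from associativity of lower-triangular matrix multiplication; this is very much in the spirit of the paper, and it is legitimate because that identity is itself established at the level of the FTRA without presupposing associativity of $\circ$. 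The only caveat worth recording is that this route quietly uses injectivity of $f\mapsto(1,f)$, which you do justify (column $1$ of $(1,f)$ recovers the coefficients of $f$), so no gap remains.
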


The properties of the substitution group of power series are discussed by Jennings \cite{Jen}, Johnson \cite{Joh}, and Babenko \cite{Bab}. 

We denote the centralizers of a subset $S\in {\cal F}_1$ as 
$C_{{\cal F}_1}(S)=\{ h\in{\cal F}_1:h\circ f=f\circ h \,\,\mbox{for all}\,\, f\in S\}.$ We now use the centralizers of ${\cal F}_1$ to describe some centralizers of ${\cal R}$. 

\begin{proposition}\label{pro:3.2}
The centralizer of a Lagrange type Riordan array $(1,h)$ with $h\in {\cal F}_1$, $h'(0)=1$, is 

\begin{equation}\label{4.1}
C_{{\cal R}}(1,h)=\left\{\begin{array}{ll} {\cal R} & if\, h=z\\ \{ (1,f):f\in C_{{\cal F}_1}(h)\} &if\, h\not= z.\end{array}\right.
\end{equation}
\end{proposition}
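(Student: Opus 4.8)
The plan is to reduce the commuting relation $(d,g)(1,h)=(1,h)(d,g)$ to two functional equations by a direct application of the product rule \eqref{Proddef}. For an arbitrary Riordan array $(d,g)\in{\cal R}$ (so $d\in{\cal F}_0$, $g\in{\cal F}_1$), I would compute, using $1\circ g=1$ and $1\cdot(d\circ h)=d\circ h$,
\[
(d,g)(1,h)=(d,\,h\circ g),\qquad (1,h)(d,g)=(d\circ h,\,g\circ h).
\]
Equating the two products componentwise, $(d,g)\in C_{\cal R}(1,h)$ holds if and only if
\[
d=d\circ h \qquad\text{and}\qquad h\circ g=g\circ h ,
\]
where the second condition is precisely the statement that $g\in C_{{\cal F}_1}(h)$.

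Next I would settle the easy cases. If $h=z$ then $(1,h)=(1,z)=I$ is the group identity, which commutes with everything, so $C_{\cal R}(1,h)={\cal R}$, giving the first line of \eqref{4.1}. For the reverse inclusion in the case $h\neq z$, I would observe directly that for any $f\in C_{{\cal F}_1}(h)$ one has $(1,f)(1,h)=(1,h\circ f)=(1,f\circ h)=(1,h)(1,f)$, so that $\{(1,f):f\in C_{{\cal F}_1}(h)\}\subseteq C_{\cal R}(1,h)$ without further work.

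The substance is the forward inclusion when $h\neq z$, which rests on showing that $d=d\circ h$ forces $d$ to be constant; I expect this order-comparison estimate to be the main obstacle and the one place where $h\neq z$ is used essentially. Writing $h(z)=z+h_mz^m+O(z^{m+1})$ with $m\ge 2$ and $h_m\neq 0$ (available precisely because $h'(0)=1$ and $h\neq z$), I would argue by contradiction: if $d\in{\cal F}_0$ were nonconstant, let $n\ge 1$ be the order of $d-d(0)$, so $d(z)=d_0+d_nz^n+O(z^{n+1})$ with $d_n\neq 0$. From $h(z)^k-z^k=k\,h_m z^{\,m+k-1}+O(z^{\,m+k})$, the lowest-order contribution to $d\circ h-d=\sum_{k\ge 1}d_k\bigl(h(z)^k-z^k\bigr)$ comes from $k=n$ and equals $n\,d_n h_m\,z^{\,m+n-1}$, whose coefficient is nonzero, contradicting $d\circ h-d=0$. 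Hence $d$ is a nonzero constant. Combined with $g\in C_{{\cal F}_1}(h)$, the full centralizer is $\{(c,g):c\in{\mathbb K}\setminus\{0\},\,g\in C_{{\cal F}_1}(h)\}$; since the scalar arrays $(c,z)$ are central in ${\cal R}$ and therefore lie in every centralizer, normalizing this overall constant to $c=1$ (equivalently, reading \eqref{4.1} modulo the central scalar subgroup) yields exactly the form $\{(1,f):f\in C_{{\cal F}_1}(h)\}$ asserted in the statement, completing the proof.
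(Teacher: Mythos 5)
Your proof is correct and follows the same top-level reduction as the paper: both apply \eqref{Proddef} to turn $(d,g)(1,h)=(1,h)(d,g)$ into the two conditions $d=d\circ h$ and $g\circ h=h\circ g$, dispose of $h=z$ trivially, and note that the second condition is exactly $g\in C_{{\cal F}_1}(h)$. Where you genuinely diverge is in the key lemma that $d=d\circ h$ forces $d$ to be constant when $h\neq z$. The paper expands $g(h)$ by the composition-of-series formula (a sum over compositions of $n$), isolates the first nonvanishing higher coefficient $h_\ell$ of $h$, and runs an induction on the equations at indices $\ell,\ell+1,\ldots,\ell+m$ to kill $g_1,g_2,\ldots$ one at a time. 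Your lowest-order-term argument --- if $d-d(0)$ has order $n$ then the coefficient of $z^{m+n-1}$ in $d\circ h-d$ is $n\,d_nh_m\neq 0$ --- reaches the same conclusion in one stroke and is cleaner, while the paper's route stays closer to its running theme of composition coefficients and Fa\`a di Bruno; both use $h\neq z$ in the same essential way. One remark: your closing observation that the argument actually yields $C_{{\cal R}}(1,h)=\{(c,g):c\neq 0,\ g\in C_{{\cal F}_1}(h)\}$ is accurate, since the constant arrays $(c,z)$ are central; the paper arrives at the stated form $\{(1,f)\}$ only because its proof silently normalizes $g(0)=1$ at the outset, so your explicit appeal to normalization modulo the central scalar subgroup is not a gap in your argument but a point where the proposition as stated is slightly loose.
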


\begin{proof}
It is obvious that if $h=z$, then $C_{\cal R}(1,z)={\cal R}$. 

Suppose $h(z)=\sum_{n\geq 1} h_nz^n\not= z$, thus there exists a positive integer $\ell$ such that $h_\ell\not=0$ and $h_2=h_3=\cdots=h_{\ell-1}=0$. 
Without loss of generality, we may assume $h_1=1$. From compositional formula \eqref{0.0} of $g\in {\cal F}_0$, where $g(z)=\sum_{n\geq 0} g_nz^n$ with $g_0=1$, and the above $h\in {\cal F}_1$, we have $(g\circ h)(z)=\sum_{n\geq 0}d_nz^n$, where $d_0 = g_0=1$ and the other coefficient $c_n$ for $n \geq 1$ can be expressed as a sum over compositions of $n$  or as an equivalent sum over partitions of $n$:

\[
d_{n}=\sum _{\mathbf {i} \in {\mathcal {D}}_{n}}g_{k}h_{i_{1}}h_{i_{2}}\cdots h_{i_{k}},
\]
where

\[
{\mathcal {D}}_{n}=\{(i_{1},i_{2},\dots ,i_{k})\,:\ 1\leq k\leq n,\ i_{1}+i_{2}+\cdots +i_{k}=n\}
\]
is the set of compositions of $n$ with $k$ denoting the number of parts. If $(g,f)\in C_{\cal R}(1,h)$, then $(g,f)(1,h)=(1,h)(g,f)$ implies 

\begin{equation}\label{4.2}
h(f)=f(h)\quad \mbox {and}\quad  g=g(h),
\end{equation}
where the first equation of \eqref{4.2} implies $f\in C_{{\cal F}_1}(h)$, while the second equation derives $[z^n]g=[z^n]g(h)$, i.e., $g_n=d_n$, 
which implies

\begin{equation}\label{4.3}
g_n=\sum _{\mathbf {i} \in {\mathcal {D}}_{n}}g_{k}h_{i_{1}}h_{i_{2}}\cdots h_{i_{k}}.
\end{equation}
For the $h$ assumed above, i.e., $h(z)=\sum_{n\geq 1} h_nz^n\not= z$ with $h_\ell\not=0$ and $h_2=h_3=\cdots=h_{\ell-1}=0$, we substitute 
$n=\ell$ and the conditions of $h_{i_1}, h_{i+2}, \ldots,$ and $h_{i_k}$ into the \eqref{4.3} and get 

\[
g_\ell=g_1h_\ell+2 g_2h_1h_2+\cdots +g_\ell h_1^\ell.
\]
In the sum on the right-hand side of the above equation, every term except the first one and the last one contains $h_2, h_3, \ldots,$ and $h_{\ell-1}$. Thus the equation can be reduced to 

\[
g_\ell=g_1h_\ell+g_\ell h_1^\ell=g_1h_\ell+g_\ell,
\]
which implies $g_1=0$ due to $h_\ell\not= 0$. For $n=\ell$, \eqref{4.3} gives 

\[
g_{\ell+1}=g_1h_{\ell+1}+2g_2h_1h_\ell+\cdots+g_{\ell+1}h_1^{\ell+1}=2g_2h_\ell+g_{\ell+1}
\]
because $g_1=0$, $h_1=1$, and $h_2=h_3=\cdots=h_{\ell-1}=0$. Thus from the  assumption of $h_\ell\not= 0$, we obtain $g_2=0$. Similarly, by using the induction assumption of $g_2=g_3=\cdots =g_m=0$, we may have $g_{m+1}=0$ from 

\begin{eqnarray*}
&&g_{\ell+m}=g_1h_{\ell+k}+2g_2h_1h_{\ell+m-1}+\cdots\\
&&\quad +g_{m+1}((m+1)h_1^mh_\ell+\mbox{terms contain $h_2,h_3,\ldots, h_{\ell -1}$})+\cdots\\
&&\quad+g_{\ell+m}h_1^{\ell+m}.
\end{eqnarray*}
More precisely, from the induction assumption and the conditions of $h_2=h_3=\cdots=h_{\ell-1}=0$, the above equation can be reduced to 

\[
g_{\ell+m}=(m+1)g_{m+1}h_\ell+g_{\ell+m},
\]
which implies $g_{m+1}=0$. Hence, we have proved that for $h(z)\not= z$, every element $(g,f)$ of the centralizers of $(1,h)$ in Riordan group must has 
$g=1$, completing the proof of the proposition. 
\end{proof}

Similarly, we have the following result.
\begin{proposition}\label{pro:3.3}
The centralizer of a Bell type Riordan array $(h/z,h)$ is 

\[
C_{{\cal R}}(h/z,h)=\left\{\begin{array}{ll} {\cal R} & if\, h=z\\ \{ (f/z,f):f\in C_{{\cal F}_1}(h)\} &if\, h\not= z.\end{array}\right.
\]
\end{proposition}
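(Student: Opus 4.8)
The plan is to mirror the proof of Proposition \ref{pro:3.2}, reducing the first-coordinate condition to exactly the fixed-point equation solved there. First I would dispose of the trivial case: if $h=z$ then $(h/z,h)=(1,z)=I$ is the identity of ${\cal R}$, so every array commutes with it and $C_{{\cal R}}(h/z,h)={\cal R}$. For $h\neq z$, I take an arbitrary $(d,f)\in C_{{\cal R}}(h/z,h)$ and expand both products by the multiplication rule \eqref{Proddef}, obtaining
\[
(d,f)(h/z,h)=\Bigl(d(z)\,\frac{h(f(z))}{f(z)},\,h(f(z))\Bigr),\qquad
(h/z,h)(d,f)=\Bigl(\frac{h(z)}{z}\,d(h(z)),\,f(h(z))\Bigr).
\]
Equating coordinates, the second coordinates give $h\circ f=f\circ h$, i.e. $f\in C_{{\cal F}_1}(h)$, exactly as in \eqref{4.2}, while the first coordinates give $d(z)\,h(f(z))/f(z)=\bigl(h(z)/z\bigr)\,d(h(z))$.

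The key step is to turn this first-coordinate equation into the condition already handled for $(1,h)$ in Proposition \ref{pro:3.2}. Since $f\in{\cal F}_1$ and $d\in{\cal F}_0$, the series $u(z)=z\,d(z)/f(z)$ lies in ${\cal F}_0$, and $d=(f/z)\,u$. Substituting this, the factors $f(z)$ and $h(z)$ cancel and the equation becomes $u(z)\,h(f(z))=f(h(z))\,u(h(z))$; using $h(f(z))=f(h(z))$ from the second coordinate and dividing by the nonzero series $f(h(z))$ leaves $u(z)=u(h(z))$, that is, $u=u\circ h$. This is precisely the first-coordinate relation \eqref{4.3} analyzed in Proposition \ref{pro:3.2}. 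I may therefore invoke the same inductive argument verbatim: letting $\ell\geq 2$ be the least index with $h_\ell\neq 0$ (which exists because $h\neq z$), comparison of coefficients in $u=u\circ h$ forces $u_n=0$ for all $n\geq 1$, so $u$ is a nonzero constant; under the normalization of the constant term used in Proposition \ref{pro:3.2} this constant equals $1$, whence $d=f/z$. Thus every centralizing element has the form $(f/z,f)$ with $f\in C_{{\cal F}_1}(h)$.

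Finally I would verify the reverse inclusion: for any $f\in C_{{\cal F}_1}(h)$ the Bell-type array $(f/z,f)$ indeed commutes with $(h/z,h)$. This is immediate from the two displayed products once $h\circ f=f\circ h$ is inserted, since both $(f/z,f)(h/z,h)$ and $(h/z,h)(f/z,f)$ then reduce to $\bigl(h(f)/z,\,h(f)\bigr)$.

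I expect the main obstacle to be the algebraic reduction of the first-coordinate equation to $u=u\circ h$: one must introduce $u=z\,d/f$ and apply $h\circ f=f\circ h$ in the correct order so that every factor other than $u$ cancels cleanly, which is what makes the appeal to Proposition \ref{pro:3.2} legitimate rather than merely plausible. Once that reduction is secured the remaining work is identical to the induction already carried out there, and the only additional bookkeeping is tracking the free constant term $u_0$ and fixing it by normalization.
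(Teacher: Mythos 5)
Your proof is correct and follows the route the paper intends (the paper itself gives no argument here beyond the word ``Similarly''): you reduce the first-coordinate condition to the fixed-point equation $g=g\circ h$ already solved in Proposition \ref{pro:3.2}, and your substitution $u(z)=z\,d(z)/f(z)$ makes that reduction explicit and clean. The one caveat, which you rightly flag and which is inherited from the paper's own treatment of Proposition \ref{pro:3.2}, is that the induction really yields $d=c\,f/z$ for a nonzero constant $c$ --- and indeed $(c\,f/z,\,f)$ commutes with $(h/z,h)$ for every $c\neq 0$ --- so the stated equality holds only after normalizing the constant term, i.e.\ modulo the central elements $(c,z)$ of ${\cal R}$.
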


\begin{corollary}
Let $S$, and $T$ be subsets of the Appel and Bell subgroups of ${\cal R}$, which contain more than one element. Then we have $C_{{\cal R}}(S)=\{ (1,f):f\in C_{{\cal F}_1}(h)\}$ and $C_{{\cal R}}(T)=\{ (f/z,f):f\in C_{{\cal F}_1}(h)\}$, respectively. 
\end{corollary}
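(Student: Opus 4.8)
The plan is to derive the corollary from Propositions~\ref{pro:3.2} and~\ref{pro:3.3} together with the elementary group-theoretic identity
\[
C_{\cal R}(S)=\bigcap_{s\in S}C_{\cal R}(s),
\]
valid in any group because an element commutes with every member of $S$ precisely when it lies in each individual centralizer. Since the answer asserted for $C_{\cal R}(S)$ has the Lagrange-type form $\{(1,f)\}$, I read $S$ as a subset of the Lagrange subgroup ${\cal L}$ and write each of its members as $(1,h)$; correspondingly $T$ is a subset of the Bell subgroup ${\cal B}$ with members $(h/z,h)$.

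First I would set $H=\{h:(1,h)\in S\}$. Because $S$ has more than one element and the identity $(1,z)$ is the unique array in ${\cal L}$ with second component $z$, the set $H$ must contain at least one $h_0\neq z$. Proposition~\ref{pro:3.2} then gives $C_{\cal R}(1,h_0)=\{(1,f):f\in C_{{\cal F}_1}(h_0)\}$, so already every array in $C_{\cal R}(S)$ has first component $1$; this is exactly the step that discards the degenerate alternative $C_{\cal R}(1,z)={\cal R}$. Next I would intersect the remaining factors: for each $(1,h)\in S$ the factor is ${\cal R}$ when $h=z$ (no restriction) and $\{(1,f):f\in C_{{\cal F}_1}(h)\}$ when $h\neq z$. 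Using
\[
\{(1,f):f\in C_{{\cal F}_1}(h_1)\}\cap\{(1,f):f\in C_{{\cal F}_1}(h_2)\}=\{(1,f):f\in C_{{\cal F}_1}(h_1)\cap C_{{\cal F}_1}(h_2)\}
\]
and $\bigcap_{h\in H}C_{{\cal F}_1}(h)=C_{{\cal F}_1}(H)$, the intersection collapses to $\{(1,f):f\in C_{{\cal F}_1}(H)\}$, which is the claimed description once one reads the symbol $C_{{\cal F}_1}(h)$ in the statement as the joint centralizer $C_{{\cal F}_1}(H)$ of all second components appearing in $S$. The Bell-type assertion for $T$ follows by the identical argument, invoking Proposition~\ref{pro:3.3} in place of Proposition~\ref{pro:3.2} and carrying $(f/z,f)$ in place of $(1,f)$.

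The main difficulty here is interpretive rather than computational. The statement writes $C_{{\cal F}_1}(h)$ as though a single series $h$ were fixed, whereas $S$ is a set, so the clean proof must make explicit that the intended object is the simultaneous centralizer $C_{{\cal F}_1}(H)$, and must justify that the hypothesis ``$S$ contains more than one element'' is precisely what forbids the degenerate case $S=\{(1,z)\}$ in which $C_{\cal R}(S)$ would be all of ${\cal R}$. Once these points are pinned down, no genuine calculation is left: the result is pure bookkeeping layered on the two preceding propositions and the intersection formula.
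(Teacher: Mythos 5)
Your argument is correct and is precisely the derivation the paper leaves implicit: the corollary is stated there without proof as an immediate consequence of Propositions 3.2 and 3.3, obtained exactly as you do by writing $C_{\cal R}(S)=\bigcap_{s\in S}C_{\cal R}(s)$ and using the hypothesis that $S$ has more than one element to guarantee some $h_0\neq z$, which discards the degenerate factor $C_{\cal R}(1,z)={\cal R}$. Your two interpretive repairs --- reading ``Appel'' as the Lagrange subgroup $\{(1,h)\}$ (since that is what Proposition 3.2 addresses and what the asserted answer $\{(1,f)\}$ requires) and reading $C_{{\cal F}_1}(h)$ as the joint centralizer $C_{{\cal F}_1}(H)$ of all second components occurring in $S$ --- are the right way to make the statement precise.
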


\noindent{\bf Remark 3.1} Propositions \ref{pro:3.2} and \ref{pro:3.3} are not trivial. Although the maps $\gamma_1: {\cal L}\to {\cal F}_1$ and $\gamma_2: {\cal B}\to {\cal F}_1$ defined by $\gamma_1 (1,f)=f$ and $\gamma_2(f/z,f)=f$, respectively, for all $f\in{\cal F}_1$ are group isomorphisms, $\gamma_k$, $k=1,2$, do not transfer the centralizers from ${\mathcal F}_1$ to ${\mathcal{R}}$. This is because ${\mathcal F}_1$ contains only one type formal power series, $f\in \mathcal{F}_1$ with $f(0)=0$ and $f '(0)\not=0$, while Riordan array $(g,f)$ contains two type formal power series including the above $f$ and $g\in{\mathcal {F}}_0$ with $g(0)\not= 0$.

\section{Related topics}\label{sec:3}

In this section, first we recall the definition of a reverser of a group and study the collection of all reversers of a group as well as their applications to the centralizers of a group. The part on the reversers is a slight modification of a survey shown on pages 25-27 of O'Farrell and Short \cite{OFS}. Secondly, we consider the set of all elements of a group that can be reversed by an element of a group, particularly, the set of all elements of the Riordan group ${{\cal R}}$ that can be reversed by $M(1,-z)$, i.e., the set of pseudo-involutions of ${{\cal R}}$. Shapiro and one of the authors \cite{HS20} found that this set is a twisted subgroup of ${{\cal R}}$. 

\begin{definition}\label{def:1.2}
An element $g$ of a group $G$ is said to be \textit{reversible} in $G$ if there is another element $h$ of $G$ such that 
\[
hgh^{-1}=g^{-1}.
\]
We say that $h$ reverses $g$ or that $h$ is a \textit{reverser} for $g$. We denote by $R_g(G)$, or just $R_g$, the set of reversers of $g$. 

Denote by $R_G(h)$ the set of all elements in $G$ that can be reversed by $h$, namely,

\begin{equation}\label{1.4-2}
R_G(h)=\{ g\in G: hgh^{-1}=g^{-1}\}.
\end{equation}
We call $R_G(h)$ the reverse set in $G$ reversed by $h$. 
\end{definition}

\begin{definition}\label{def:1.3}
Two elements $f$ and $g$ of a group $G$ are said to be \textit{conjugate} in $G$ if there is a third element $h$ such that 
\[
f=h^{-1}gh,
\]
which can be denoted by $g^h$. Then $f$ is called the conjugate of $g$ by $h$. 
\end{definition}

\begin{definition}\label{def:1.4}
The commutator of two elements, g and h, of a group G, is the element
\[
[g, h] = g^{-1}h^{-1}gh.
\]
It is equal to the group's identity if and only if g and h commute. The subgroup of G generated by all commutators is called the derived group or the commutator subgroup of G. The commutator subgroup of the Riordan group ${{\cal R}}$ is discussed in Luz\'on, Mor\'on, and Prieto-Martinez \cite{LMPM}. 
\end{definition}

The relationship between the conjugate of $g$ by $h$ and the commutator of $g$ and $h$ is 
\[
g^h=g[g,h].
\]
Particularly, if $g\in C_G(h)$, then $[g,h]=e$ and $g^h=g$. 

\begin{proposition}\cite{OFS}
Let $g$ be reversible in $G$, and suppose that $h \in R_g$. Then \
\[
R_g = C_gh = hC_g.
\]
\end{proposition}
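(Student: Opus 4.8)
The plan is to establish the two equalities $R_g = hC_g$ and $R_g = C_g h$ separately, each by the standard argument that, once a single reverser $h$ is fixed, identifies the reverser set with one coset of the centralizer $C_g$. The governing relation is the defining identity $hgh^{-1} = g^{-1}$, and before starting I would record its symmetric consequence: conjugating this identity by $h^{-1}$ gives $g = h^{-1}g^{-1}h$, and taking inverses of both sides yields $h^{-1}gh = g^{-1}$. Thus $h$ reverses $g$ whether applied on the left or on the right, which is precisely what makes both a left coset and a right coset of $C_g$ appear.

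For the inclusion $R_g \subseteq hC_g$, I would take an arbitrary reverser $k \in R_g$, so that $kgk^{-1} = g^{-1} = hgh^{-1}$, and then rearrange to $(h^{-1}k)g(h^{-1}k)^{-1} = g$; this says $h^{-1}k \in C_g$, i.e. $k \in hC_g$. For the reverse inclusion $hC_g \subseteq R_g$, I would take $k = hc$ with $c \in C_g$ and compute $kgk^{-1} = h(cgc^{-1})h^{-1} = hgh^{-1} = g^{-1}$, so $k \in R_g$. Together these give $R_g = hC_g$.

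For $R_g = C_g h$ I would argue symmetrically, now using the companion identity $h^{-1}gh = g^{-1}$. Given $k \in R_g$, I would show $kh^{-1} \in C_g$ via the computation $(kh^{-1})g(kh^{-1})^{-1} = k(h^{-1}gh)k^{-1} = kg^{-1}k^{-1} = (kgk^{-1})^{-1} = g$, whence $k \in C_g h$; conversely, for $k = ch$ with $c \in C_g$, one has $kgk^{-1} = c(hgh^{-1})c^{-1} = cg^{-1}c^{-1} = (cgc^{-1})^{-1} = g^{-1}$, so $k \in R_g$. This yields $R_g = C_g h$ and completes the proof.

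The argument is elementary group theory, so I do not expect a serious obstacle; the only point requiring genuine care is the bookkeeping with inverses, specifically extracting $h^{-1}gh = g^{-1}$ from $hgh^{-1} = g^{-1}$ so that the right-coset computation closes, and keeping the two cosets $hC_g$ and $C_g h$ distinct and correctly oriented throughout.
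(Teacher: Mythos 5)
Your proof is correct and follows essentially the same coset argument as the paper: the same two computations $(h^{-1}k)g(h^{-1}k)^{-1}=g$ and $(hc)g(hc)^{-1}=g^{-1}$ establish $R_g=hC_g$. You additionally write out the right-coset case $R_g=C_gh$ explicitly via the derived identity $h^{-1}gh=g^{-1}$, which the paper merely asserts can be done "by the same method."
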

Here is an explanation of the last proposition. Let $f \in C_g$. Since $h \in R_g$, then
$fg=gf$ and $hgh^{-1}=g^{-1}$. We have 
\[
(hf)g(hf)^{-1} = hfgf^{-1}h^{-1} = hgff^{-1}h^{-1} = hgh^{-1}  = g^{-1}.
\]
Thus $hC_g \subset R_g$. Conversely, let $k \in R_g$, then
\begin{equation}
kgk^{-1}=g^{-1}.
\end{equation}
We have 
\[
(h^{-1}k)g(h^{-1}k)^{-1}  = h^{-1}kgk^{-1}h = h^{-1}g^{-1}h = g.
\]
Thus $h^{-1}k \in C_g$, which implies that $k \in hC_g$. Hence, $R_g \subset hC_g$. 
The same method could be applied to show $R_g=C_gh$. 	

The above proposition motivates the definition of the extended centralizer:
\[
E_g(G):=\{ h\in G: g^h=g\quad or \quad g^h=g^{-1}\} =C_g\cup R_g.
\]
This is also a subgroup of $G$. 

\begin{proposition}\cite{OFS}
Let $g\in G$. Then

(i) if $g$ is not reversible, then $E_g=C_g$ and $R_g=\phi$.

(ii) if $g$ is an involution, then $E_g=C_g=R_g$.
\end{proposition}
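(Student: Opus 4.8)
The plan is to prove both parts purely by unwinding the definitions $C_g=\{h\in G:hg=gh\}$, $R_g=\{h\in G:hgh^{-1}=g^{-1}\}$ (Definition \ref{def:1.2}), and $E_g=C_g\cup R_g$; no machinery beyond this is needed, so the ``proof'' is really a matter of applying each hypothesis to the appropriate defining condition.

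For part (i), I would let the hypothesis do almost all of the work. By definition, $g$ is reversible precisely when some $h\in G$ satisfies $hgh^{-1}=g^{-1}$, i.e. precisely when $R_g\neq\phi$. Hence ``$g$ is not reversible'' is, verbatim, the assertion $R_g=\phi$. It then remains only to substitute this into $E_g=C_g\cup R_g$, giving $E_g=C_g\cup\phi=C_g$.

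For part (ii), I would start from the fact that an involution satisfies $g^2=e$, equivalently $g=g^{-1}$. The key step is to observe that under $g=g^{-1}$ the defining relation of $R_g$ collapses onto that of $C_g$: an element $h$ lies in $R_g$ iff $hgh^{-1}=g^{-1}$, and replacing $g^{-1}$ by $g$ turns this into $hgh^{-1}=g$, i.e. $hg=gh$, which is exactly membership in $C_g$. Thus $R_g=C_g$, and since $C_g$ always contains the identity this also records that every involution is reversible. Finally $E_g=C_g\cup R_g=C_g\cup C_g=C_g=R_g$.

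The only point requiring a little care — and it is the closest thing to an obstacle — is bookkeeping with the two equivalent descriptions of $E_g$ used in the excerpt: the set-builder form $\{h:g^h=g\text{ or }g^h=g^{-1}\}$ with $g^h=h^{-1}gh$, versus the union $C_g\cup R_g$ phrased through the relations $hg=gh$ and $hgh^{-1}=g^{-1}$. To keep the argument clean I would work throughout with the union form together with the conjugation-equation definitions of $C_g$ and $R_g$, so that parts (i) and (ii) reduce to the one-line substitutions above. If one prefers the $g^h$ form, one checks once (using that $R_g$ is stable under inversion) that $\{h:g^h=g^{-1}\}=R_g$, after which the same two substitutions finish the proof.
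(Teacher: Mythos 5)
Your proof is correct: the paper states this proposition without proof (citing O'Farrell--Short), and your argument --- $R_g=\phi$ being a restatement of non-reversibility for (i), and $g=g^{-1}$ collapsing the defining relation of $R_g$ onto that of $C_g$ for (ii) --- is the standard definitional unwinding one would supply. Your side remark reconciling $E_g=\{h:g^h=g\text{ or }g^h=g^{-1}\}$ with $C_g\cup R_g$ is also right, since $h^{-1}gh=g^{-1}$ and $hgh^{-1}=g^{-1}$ are equivalent by inverting both sides.
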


\begin{proposition}\cite{OFS}
Let $g$ and $h$ be elements of a group $G$. Then (i) $C_{g^{-1}}=C_g$; (ii) $R_{g^{-1}}=R_g$; (iii) $C_{hgh^{-1}}=hC_gh^{-1}$; (iv) $R_{hgh^{-1}}=hR_gh^{-1}$.
\end{proposition}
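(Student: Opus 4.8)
The plan is to verify each of the four identities by a direct element-chase, exploiting the fact that membership in $C_g$ and in $R_g$ is governed by a single defining equation that can be transported under inversion and under conjugation. Throughout I would work from the characterizations $x\in C_g \iff xg=gx$ and $x\in R_g \iff xgx^{-1}=g^{-1}$, and I would prove each claimed equality as a chain of equivalences so that both inclusions are obtained simultaneously.

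For (i), I would start from $xg=gx$ and multiply on each side by $g^{-1}$ to obtain $g^{-1}x=xg^{-1}$, which is exactly the statement $x\in C_{g^{-1}}$; since every step is reversible, this gives $C_g=C_{g^{-1}}$. For (ii), I would take the inverse of the defining relation $xgx^{-1}=g^{-1}$, using $(ab)^{-1}=b^{-1}a^{-1}$, to get $xg^{-1}x^{-1}=g$, i.e. $x\in R_{g^{-1}}$; as before the implication is an equivalence, so $R_g=R_{g^{-1}}$.

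For (iii) and (iv) the idea is to substitute $y=hxh^{-1}$, noting that $x=h^{-1}yh$ ranges over all of $G$ as $y$ does, and then to rewrite the defining relation for $hgh^{-1}$ in terms of $x$ and $g$. In (iii), the condition $y(hgh^{-1})=(hgh^{-1})y$ becomes $hxgh^{-1}=hgxh^{-1}$, and after cancelling the outer $h$ on the left and $h^{-1}$ on the right it reduces to $xg=gx$; hence $y\in C_{hgh^{-1}}\iff x\in C_g\iff y\in hC_gh^{-1}$. In (iv), I would use $(hgh^{-1})^{-1}=hg^{-1}h^{-1}$ and compute $y(hgh^{-1})y^{-1}=h(xgx^{-1})h^{-1}$, so that the reversing condition $y(hgh^{-1})y^{-1}=(hgh^{-1})^{-1}$ becomes $xgx^{-1}=g^{-1}$, i.e. $x\in R_g$, giving $R_{hgh^{-1}}=hR_gh^{-1}$.

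The only step demanding slight care is (iv), where the conjugation by $h$ and the inversion of $g$ must be prevented from interfering; after the substitution $y=hxh^{-1}$, however, the outer factors $h$ and $h^{-1}$ cancel cleanly on both sides and the computation collapses to the definition of $R_g$. Since none of the four parts requires more than cancellation and one use of the rule $(ab)^{-1}=b^{-1}a^{-1}$, I do not expect a genuine obstacle: the argument is purely formal, and the main point is simply to organize it as four parallel equivalences.
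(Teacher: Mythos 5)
Your proof is correct in all four parts: (i) follows by multiplying $xg=gx$ by $g^{-1}$ on both sides, (ii) by inverting the relation $xgx^{-1}=g^{-1}$, and (iii)--(iv) by the substitution $y=hxh^{-1}$, under which the outer conjugating factors cancel and the defining conditions for $C_{hgh^{-1}}$ and $R_{hgh^{-1}}$ collapse to those for $C_g$ and $R_g$. The paper itself supplies no proof of this proposition --- it is quoted from O'Farrell and Short \cite{OFS} --- so there is nothing to compare against; your direct element-chase, organized as chains of equivalences so that both inclusions come at once, is the standard argument and fills the omission cleanly.
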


The extended centralizer group $E_g$ consists of $C_g$ and a single coset $R_g$. If $C_g$ is abelian, then we have a generalized dihedral group. Hence, there is a question arises: are there examples where $C_g$ is non-abelian? The answer is ''yes''. Here are two examples. If $G$ is non-abelian group with the identity $e$, then the centralizer $C_G(e)=G$ is not abelian. Another example is the dihedral group of order $4n$. This has an element, the rotation of $180^o$, in its center.

We now give the definition of a twisted subgroup.

\begin{definition}\label{def:3.1}
The set of elements reversed by $h\in{G}$ forms a twisted subgroup $T\equiv T_h:=\{ g\in G: hgh^{-1}=g^{-1}\}$ of $G$ defined by (see Foguel and Ungar \cite {FU00, FU01}): 

(i) $e\in T$ ($h$ reverses $e$)

(ii) $T$ is closed under taking inverse (if $h$ reverses $g$, then $h$ reverses $g^{-1}$).

(iii) if $x,y\in T$, then $xyx\in T$.
\end{definition}

The set $R(G)$ of reversible elements in $G$ is a union of twisted subgroups. $R(G)$ contains the identity and is closed under taking inverse, but it is not necessarily closed under composition. However, if $hfh^{-1}=f^{-1}$ and $hgh^{-1}=g^{-1}$, i.e., $f,g\in R(G)$ and both are reversed by the same $h$, then 
\[
(hg)(fg)(hg)^{-1}=(fg)^{-1},
\]
i.e., $hg\in R_{fg}$. 

Considering the set of the pseudo-involutions of Riordan group ${\cal R}$, which means the set of all $D\in {\cal R}$ such that $MD$ (and $DM$) is an involution, where $M=(1,-t)$. Then $MDM^{-1}=D^{-1}$. Shapiro \cite{Sha2} proved that the centralizer of $M$ in ${{\cal R}}$ is ${\mathcal C}$, the checkerboard subgroup (cf. also Jean-Louis and Nkwanta \cite{JLN}). Hence, from \cite{HS20} and \cite{Sha2} we have the the first half and the second half, respectively, of the following result. 

\begin{proposition}\label{pro:3.2-2}
The reserves set in ${{\cal R}}$ reversed by $M$, $R_{{\cal R}}(M)$, is a collection of all elements of pseudo-involutions form a twisted subgroup, which is denoted by $T_M$. The centralizers of $M$ in ${{\cal R}}$, 
$C_{{\cal R}}(M)$, is the checkerboard subgroup of ${{\cal R}}$. 
\end{proposition}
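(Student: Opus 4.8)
The plan is to prove the two halves separately, each reducing to a short computation with the Riordan product \eqref{Proddef}. For the first half, I would begin by recording that $M=(1,-t)$ is itself an involution: applying \eqref{Proddef} gives $M^2=(1,-t)(1,-t)=(1,-(-t))=(1,t)=I$, so $M^{-1}=M$. With this in hand, a Riordan array $D$ is a pseudo-involution, i.e.\ $(MD)^2=I$, if and only if $MDMD=I$, equivalently $MDM=D^{-1}$, equivalently $MDM^{-1}=D^{-1}$. This last equation is precisely the membership condition $D\in R_{\cal R}(M)$ of Definition \ref{def:1.2}, so the set of pseudo-involutions coincides with the reverse set $R_{\cal R}(M)$.

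That $R_{\cal R}(M)$ is a twisted subgroup is then immediate from Definition \ref{def:3.1}, since $R_{\cal R}(M)=\{D\in{\cal R}:MDM^{-1}=D^{-1}\}=T_M$. For completeness I would verify the three defining properties directly: property (i) holds because $MIM^{-1}=I=I^{-1}$; property (ii) follows by inverting $MDM^{-1}=D^{-1}$; and the only substantive point, property (iii), follows because $x,y\in T_M$ forces $M(xyx)M^{-1}=(MxM^{-1})(MyM^{-1})(MxM^{-1})=x^{-1}y^{-1}x^{-1}=(xyx)^{-1}$. This reproduces the statement credited to \cite{HS20}.

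For the second half, I would compute both sides of the commuting relation $(d,h)M=M(d,h)$ using \eqref{Proddef}. The left-hand side is $(d,h)(1,-t)=(d,-h)$, while the right-hand side is $(1,-t)(d,h)=(d(-t),h(-t))$. Equating the two coordinates yields $d(z)=d(-z)$ and $-h(z)=h(-z)$, that is, $d$ must be an even function and $h$ an odd function. These are exactly the conditions defining the checkerboard subgroup ${\cal C}$ in the list of subgroups in Section \ref{intro}, so $C_{\cal R}(M)={\cal C}$, recovering Shapiro's result \cite{Sha2}.

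Neither half presents a genuine obstacle; the work is essentially bookkeeping with the product formula. The one point demanding care is keeping the signs and the order of composition straight in \eqref{Proddef}, especially in distinguishing $(d,h)(1,-t)=(d,-h)$ from $(1,-t)(d,h)=(d(-t),h(-t))$, since it is precisely the asymmetry between these two products that produces the even/odd dichotomy characterizing ${\cal C}$. I would also emphasize that the identification of $R_{\cal R}(M)$ with the pseudo-involutions rests entirely on $M$ being an involution, which is what makes the ``$(MD)^2=I$'' formulation and the ``$MDM^{-1}=D^{-1}$'' formulation equivalent without any further hypotheses.
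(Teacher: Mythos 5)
Your proposal is correct and follows essentially the same route as the paper: verify the twisted-subgroup axioms for $R_{\cal R}(M)$ directly (the paper checks closure via $DE^{-1}D$, you via $xyx$, which are equivalent given closure under inverses), and obtain the checkerboard characterization by computing $(d,h)(1,-t)=(d,-h)$ versus $(1,-t)(d,h)=(d(-t),h(-t))$. Your added observation that $M^2=I$ makes ``$D$ is a pseudo-involution'' literally equivalent to ``$M$ reverses $D$'' is a worthwhile explicit step that the paper leaves implicit.
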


\begin{proof}
For the sake of the readers' convenience, we give a brief proof. First, $I=(1,t)\in T_M$ because $MIM^{-1}=I$. Secondly, $D=(d,h)$ and $E=(g,f)\in R_{{\cal R}}(M)$ implies that $DE^{-1}D\in R_{{\cal R}}(M)$ due to 

\begin{eqnarray*}
&&MDE^{-1}DM^{-1}=(MDM^{-1})(ME^{-1}M^{-1})(MDM^{-1})\\
&=&D^{-1}ED^{-1}=(DE^{-1}D)^{-1}.
\end{eqnarray*}
Hence, $R_{{\cal R}}(M)$ is the twisted subgroup $T_M$. 

If $D=(d,h)\in C_G(M)$, then $MD=DM$ implies that $(1,-t)(d,h)=(d,h)(1,-t)$, i.e., 

\[
(d(-t),h(-t))=(d, -h).
\]
Hence, $d(-t)=d(t)$ and $h(-t)=-h(t)$, i.e., $(d,h)$ is in the checkerboard subgroup of ${{\cal R}}$. 
\end{proof}

Let $G$ be a group and let $T\subset G$ be a twisted subgroup, from Lemma $1.2$ of Aschbacher \cite{Asc} we have $\langle x\rangle \subset T$ for each $x\in T$. Furthermore, if $T$ is a $2$-divisible twisted subgroup, i.e., for each $x\in T$, there exists a unique element $x^{1/2}\in T$ such that $(x^{1/2})^2=x$, from Corollary 3 of Glauberman \cite{Gla}, we have Lagrange's Theorem on twisted subgroup, i.e., for every $x\in T$, the order of $x$ divides $|T|$. Since $T_M$ is not finite, the Lagrange's Theorem is invalid on $T_M$. 

Shapiro and one of the authors \cite{HS20} presents a palindromic property of pseudo-involutions. More precisely, if $A$ and $B$  are both pseudo-involutions, then so is the triple product $ABA$. With this it follows that if are pseudo-involutions so is any palindromic word using these symbols. 

\section{acknowledgements}

The authors wish to express their sincere gratitude and appreciation to Professor Louis Shapiro, the referees, and the handling editor for their helpful comments and remarks that led to an revised version of the original manuscript. 
%%%\end{acknowledgements}

\end{document}